\tikzstyle{vertex}=[circle, draw, inner sep=3pt, minimum size=6pt]
\newtheorem{theorem}{Theorem}
\newtheorem{lemma}[theorem]{Lemma}
\newtheorem{conjecture}[theorem]{Conjecture}
\newtheorem{corollary}[theorem]{Corollary}
\theoremstyle{definition}
\newcommand{\Z}{\operatorname{Z}}
\title{Zero Forcing and Vertex Independence Number on Cubic and Subcubic Graphs}
\author[1]{Houston Schuerger}
\author[2]{Nathan Warnberg}
\author[3]{Michael Young}
\affil[1]{Department of Mathematics, University of Texas Permian Basin, \{schuerger\_h@utpb.edu\}}
\affil[2]{Department of Mathematics and Statistics, University of Wisconsin-La Crosse, \{nwarnberg@uwlax.edu\}}
\affil[3]{Department of Mathematical Sciences, Carnegie Mellon University,\{michaely@andrew.cmu.edu\}}
\begin{document}

\maketitle

\begin{abstract}

Motivated by a conjecture from the automated conjecturing program TxGraffiti, in this paper the relationship between the zero forcing number, $\Z(G)$, and the vertex independence number, $\alpha(G)$, of cubic and subcubic graphs is explored. TxGraffiti conjectures that for all connected cubic graphs $G$, that are not $K_4$, $\Z(G) \leq \alpha(G) + 1$. This work uses decycling partitions of upper-embeddable graphs to show that almost all cubic graphs satisfy $\Z(G) \leq \alpha(G) + 2$, provides an infinite family of cubic graphs where $\Z(G) = \alpha(G) + 1$, and extends known bounds to subcubic graphs.

\end{abstract}


\section{Introduction}

Zero forcing is a graph-theoretic concept that involves a dynamic process of coloring vertices in a graph. Initially, a subset of the vertices is colored blue, while the rest remain white. The primary rule of the zero forcing process is that a blue vertex can force a white vertex to turn blue if it is the only white neighbor of the blue vertex. This rule is applied iteratively until no further vertices can be forced to change color. The objective is to find the smallest set of initially blue vertices which can eventually turn all vertices blue.

Zero forcing has a number of practical applications, including linear algebra, where it was initially developed to address the minimum rank problem for graphs (see \cite{AIM}). 
In this context, the minimum rank of a graph is related to the rank of symmetric matrices whose non-zero entries correspond to the edges of the graph. Zero forcing is also used in studying network controllability in both classical and quantum systems (see \cite{Burgarth1, Burgarth2, Severini}). The ability to identify a minimal set of vertices that can ``control" the rest of the graph is key to understanding how information or influence can propagate through various types of networks.

Another prominent application of zero forcing is in the area of power domination, which models the problem of monitoring electrical grids (see \cite{dean, PMU}). In this model, a power monitoring unit (PMU) is placed on a vertex of the graph, and the objective is to minimize the number of PMUs required to monitor the entire network. 
This is closely related to the zero forcing process because once a vertex is monitored, neighboring vertices can also be observed under certain conditions, mirroring the propagation rules of zero forcing.

The study of zero forcing has also inspired various graph theoretic investigations such as studying the relationships between zero forcing and other graph parameters, such as the edge density, path cover number, chromatic number, spectral radius, vertex cover number, and domination number. 
These studies have explored how graph structure affects the zero forcing process, leading to insights into the complexity of the propagation process and its implications across disciplines.
This paper extends what is known about the relationship between zero forcing and vertex independence.

\subsection*{{\it TxGraffiti}}

The use of computer assistance to make conjectures and prove theorems in mathematics is powerful and growing in use. One such conjecturing program that has inspired the pursuit of many relationships between graph parameters is {\it TxGraffiti}. TxGraffiti is an automated conjecturing program that produces graph theoretic conjectures in the form of conjectured inequalities. Many conjectures produced by TxGraffiti have been proved (see \cite{BDSY,CDP,DHtotal,DH}).
More information concerning the history, methods, and contributions of TxGraffiti can be found in \cite{txgrafhist}.

 In \cite{CDP}, TxGraffiti conjectures that bound the independence number of cubic and claw-free graphs by the domination number of the graph are proved. The TxGraffitti conjectures that are proved in \cite{BDSY} relate the vertex cover number and the zero forcing number in claw-free graphs. This paper addresses a particular TxGraffitti conjecture that bounds the zero forcing number of a graph with the independence number of the graph.

\begin{conjecture}[\cite{BDSY}]\label{theconjecture}
If $G \neq K_4$ is a connected cubic graph, then $\Z(G) \le \alpha(G) + 1$.
\end{conjecture}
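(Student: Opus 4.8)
The plan is to prove the bound constructively, producing for every connected cubic $G \neq K_4$ a zero forcing set of size at most $\alpha(G)+1$, by upgrading the $\Z(G) \le \alpha(G)+2$ estimate announced in the abstract in two independent directions: sharpening its constant from $+2$ to $+1$, and removing the restriction to upper-embeddable graphs. To organize the construction I would fix a maximum independent set $I$, so that $|I| = \alpha(G)$ and its complement $C = V(G)\setminus I$ is a vertex cover in which every vertex of $I$ sends all three of its edges into $C$. The target is a zero forcing set consisting of an $\alpha$-sized core together with at most one additional seed. It is convenient to verify candidate sets through the fort criterion: a set $B$ is a zero forcing set exactly when it meets every \emph{fort}, a nonempty $F \subseteq V(G)$ in which each vertex outside $F$ has zero or at least two neighbors in $F$. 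Thus the whole problem reduces to exhibiting a transversal of the forts of $G$ of size at most $\alpha(G)+1$.

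For the sharpening step I would work with a decycling partition chosen to be compatible with $I$: delete a feedback vertex set so that the remaining forest's path structure is anchored on $I$, and lift a path cover of that forest to forcing chains in $G$. In this picture the excess over $\alpha(G)$ is caused by the \emph{seeds} needed to initiate forcing along the one or two facial walks of a maximum-genus embedding of $G$; the generic count yields two such seeds, which is the source of the $+2$. The heart of the argument is to show that one seed can always be absorbed, either because a compatible partition places it inside $I$ or because the two forcing chains can be rerouted to share a common endpoint, leaving a single genuine extra vertex. I expect this reduction from $+2$ to $+1$ to be the principal obstacle, since it is essentially a parity-and-endpoint argument on forcing chains that must be made uniform across all maximum-genus embeddings, and the extremal graphs (where the bound is tight) show there is no slack to spare.

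To remove upper-embeddability I would treat the non-upper-embeddable connected cubic graphs separately. These are rare and highly constrained: failure of upper-embeddability forces Betti deficiency at least two, which in a cubic graph is tied to the presence of bridges and small edge cuts. I would therefore decompose $G$ along its bridges and $2$-edge-cuts, prove the bound on each bridgeless, essentially $3$-connected piece by the embedding argument above, and reassemble, tracking how $\Z$ and $\alpha$ change when gluing at cut vertices and $2$-cuts by induction on the number of blocks. The finitely many sporadic obstructions, together with $K_{3,3}$, the triangular prism, and the Petersen graph, would be checked by hand; the infinite equality family constructed elsewhere in the paper both certifies sharpness and flags exactly the configurations that the seed-absorption step must not overcount.

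As a safeguard against the decycling construction being loose by one on some family, the fort viewpoint offers an independent line of attack: one bounds the minimum fort transversal directly by $\alpha(G)+1$, using that in a cubic graph every fort has tightly restricted boundary degrees, so that a maximum independent set can be perturbed into a transversal at the cost of a single vertex. Either route ultimately hinges on the same uniform improvement from $+2$ to $+1$, which is where I anticipate essentially all of the difficulty to concentrate.
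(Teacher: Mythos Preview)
The statement you are attempting to prove is labeled a \emph{conjecture} in the paper, and the paper explicitly leaves it open: in the conclusion the authors write that ``Conjecture~\ref{theconjecture} still remains open,'' and the strongest general result they obtain is $\Z(G)\le\alpha(G)+2$ for upper-embeddable cubic graphs (hence a.a.s.), with the full $+1$ bound established only in the orientably one-face-embeddable case (Corollary~\ref{cor:onefacetwoface}) and the claw-free case (Theorem~\ref{thm:clawfree}). There is therefore no ``paper's own proof'' of this statement to compare against.

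Your proposal is not a proof but a plan, and you say so yourself: the decisive step---absorbing one of the two seeds to pass from $+2$ to $+1$---is described as ``the principal obstacle'' and ``where I anticipate essentially all of the difficulty to concentrate,'' but no mechanism is given for accomplishing it. The paper's Theorem~\ref{thm:decycle4.3} shows precisely why this is hard: an orientably two-face-embeddable cubic graph may only admit decycling partitions where $S$ is near independent \emph{or} $G-S$ has two components, and in either case Theorem~\ref{thm:G-Jforest} genuinely produces $+2$, not $+1$. Your ``rerouting'' and ``parity-and-endpoint'' ideas are not made concrete enough to see whether they survive this obstruction. Likewise, the non-upper-embeddable case is handled only by a promise to decompose along bridges and small cuts and to check sporadic cases by hand; neither the decomposition bookkeeping nor the list of sporadics is supplied. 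The fort-transversal ``safeguard'' is also only a restatement of the problem: showing that a maximum independent set can be perturbed into a fort transversal at the cost of one vertex is equivalent to the conjecture itself.

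In short, you have correctly identified the architecture of the paper's partial results and the exact place where the argument currently stops, but you have not supplied the missing idea that would close the gap from $\alpha(G)+2$ to $\alpha(G)+1$ in general. That gap is the open problem.
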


Previous work on this Conjecture \ref{theconjecture}, in \cite{DH}, shows that it holds if $G$ is a claw-free graph. 

\begin{theorem}[\cite{DH}] \label{thm:clawfree}
    If $G\neq K_4$ is a connected, claw-free, cubic graph, then  $\Z(G) \le \alpha(G) + 1$.
\end{theorem}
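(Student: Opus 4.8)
The plan is to leverage the very restrictive local structure that claw-freeness forces on a cubic graph. First I would note that every vertex lies in a triangle: its three neighbours cannot be independent, so some two of them are adjacent. I would then determine the maximal subgraphs spanned by triangle-edges. Two triangles sharing only a vertex would push its degree to at least $4$, so triangles overlap only in an edge, and an edge shared by two triangles produces a diamond $K_4-e$; a further triangle on a diamond completes a $K_4$, which in a connected cubic graph must be the whole of $G$. Since $G\neq K_4$, the upshot is that $V(G)$ partitions into \emph{units}, each either a triangle with three edges leaving it or a diamond $K_4-e$ with two edges leaving it. Contracting each unit yields a connected multigraph $H$ recording how the units are joined, and this reduced picture is what the rest of the argument traverses.

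Next I would construct a zero forcing set unit by unit while simultaneously exhibiting a large independent set. The local forcing facts driving this are: a triangle two of whose external ports have already turned blue clears itself and forces its third port outward; and a diamond must be handled with care, because its two interior vertices $v,w$ can never be forced from outside (each of the two port vertices has both $v$ and $w$ as neighbours, so forcing $v$ would require $w$ already blue and vice versa). Consequently every zero forcing set must already contain one interior vertex of each diamond, which tells me exactly where to place the ``seeds'': one interior vertex per diamond, together with an ignition of one root unit, letting the force then flood outward along $H$ and using the extra (non-tree) edges to feed triangles so they pass the force through without extra cost. In parallel I would select a large independent set --- one vertex from each triangle and, where the joining edges allow, the two non-adjacent port vertices from each diamond --- and then compare its size with the number of seeds, aiming to finish with the bound $\Z(G)\le\alpha(G)+1$.

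The hard part will be making these two counts agree to within the single additive unit of slack, and the diamonds are exactly where the tension lives: each diamond compels an extra seed in the forcing set but also supplies an extra vertex for the independent set, and the argument must show that these systematically offset rather than drift apart --- the prism, for which $\Z=\alpha+1$, shows there is no room to be wasteful. I expect the independent-set half to be the more delicate one, since the locally chosen vertices must be reconciled globally across the edges of $H$ so that they are genuinely independent; I would manage this by rooting $H$ and resolving conflicts from the leaves inward. The remaining technical points are verifying that the flood actually completes (that no diamond is left half-lit and stalls the process) and treating separately the degenerate cases in which two units are joined by parallel edges.
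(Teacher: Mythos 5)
The theorem you set out to prove is not proved in this paper at all---it is quoted from Davila and Henning \cite{DH}---so there is no internal proof to compare against, and your attempt must stand on its own. Your structural opening is sound and is indeed the standard first move for claw-free cubic graphs: every vertex lies in a triangle, triangles overlap only in diamonds, a diamond extended by a further triangle forces $G=K_4$, so $V(G)$ partitions into triangle units (three outgoing edges) and diamond units (two outgoing edges); and the interior pair $\{v,w\}$ of each diamond is a fort, so every zero forcing set contains an interior vertex of every diamond. All of that is correct.

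The genuine gap is the forcing half of your plan. ``One interior seed per diamond plus one ignited root unit, with the flood fed by non-tree edges of $H$'' is not close to a zero forcing set, and no routing of the extra edges can repair it: as you yourself observe, a triangle containing no blue seed can pass force onward only after \emph{two} of its three vertices have been forced from outside. Count demand against supply in the diamond-free case: if $H$ has $t$ triangle units, it has $3t/2$ edges, and each edge can deliver an inward force to at most one of its endpoints; if only $s$ triangles contain seed vertices, the remaining $t-s$ each demand two inward forces, so $2(t-s)\le 3t/2$, i.e.\ $s\ge t/4$, and hence $\Z(G)\ge t/4$. Concretely, replace every vertex of a large cubic graph by a triangle: the result is connected, claw-free, cubic, and diamond-free, your scheme spends $O(1)$ blue vertices, yet every zero forcing set has size at least a quarter of the number of triangles. (The prism is misleading here: its single root triangle, fully blue, does force everything, which is why the small case seems to work.) The repair is to abandon the ``sparse seeds plus flood'' picture and instead seed essentially every unit---one vertex per triangle, the appropriate extra vertex per diamond---so that the zero forcing set is built \emph{on top of} the independent set rather than compared against it; the budget then reads roughly $t+d+1$ versus $\alpha(G)+1$, and the real work is the per-unit bookkeeping showing the diamond surplus absorbs the conflicts. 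That ``take a maximum independent set and modify it into a zero forcing set'' technique is exactly how this paper argues elsewhere (see Theorem \ref{thm:Delta-1}) and is much closer to the actual argument in \cite{DH} than your proposal as written.
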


\subsection*{Notation and Terminology}

All graphs in this paper are simple, finite, and undirected. The maximum degree of a graph $G$ is denoted by $\Delta(G)$. A graph is {\it cubic} if it is $3$-regular and {\it subcubic} if it has maximum degree at most $3$.  Given set $S \subset V(G)$, $G-S$ is the subgraph of $G$ induced by $V(G) \setminus S$. Given $S\subseteq E(G)$, $G-S$ is the graph with all edges of $S$ deleted from $G$. A {\it claw} is the graph $K_{1,3}$ and a graph is {\it claw-free} if it does not contain a claw as an induced subgraph. A {\it claw center} of $G$ is a vertex of degree 3 in an induced claw of $G$.

A set of pairwise nonadjacent vertices in $G$ is an {\it independent set} of $G$. The number of vertices in a maximum independent set in $G$ is the {\it independence number} of $G$, denoted $\alpha(G)$. The set is called {\it near independent} if it induces exactly one edge.

In standard zero forcing, each vertex of a graph is colored blue or white and the following {\it color change rule} is applied: If a blue vertex $v$ has exactly one white neighbor $w$, then $v$ can {\it force} $w$ to become blue. 
A {\it standard zero forcing set} of a graph $G$ is a set $B \subseteq V(G)$ such that if each vertex in $B$ is initially colored blue, then all of the vertices of $G$ could be forced blue. 
The {\it standard zero forcing number} of $G$, denoted $\Z(G)$, is the cardinality of a minimum zero forcing set of $G$. 
Since this paper only discusses standard zero forcing, it will be referred to as zero forcing. 
For other types of zero forcing see \cite{VariantsZF} and \cite{ BookZF}.

In the zero forcing process exactly one white vertex is forced blue during each step, a list of these forces in the order in which they occur is known as a {\it chronological list of forces}. These forces are called {\it a set of forces} when they are unordered and color the entire vertex set of the graph blue. A {\it forcing chain} is a sequence of vertices $(v_1,v_2,\dots,v_k)$ such that for $1 \le i \le k-1$, $v_i$ forces $v_{i+1}$ in a set of forces.

A {\it fort} of graph $G$ is a nonempty set of vertices $F$ such that no vertex outside of $F$ is adjacent to exactly one vertex in $F$. 
If each vertex of $F$ is blue and each vertex of $G-F$ is white, then no forces can happen. Therefore, every zero forcing set for $G$ must contain at least one vertex of each fort of $G$.
Another well known result is that given a vertex $v$ in a connected, nontrivial graph $G$ it is always possible to find a minimum zero forcing set that does not contain $v$.

In Section \ref{sec:cubic}, a slightly weaker version of Conjecture \ref{theconjecture} is shown to be true for almost every cubic graph and an infinite family of graphs is provided to show that the conjectured bound is tight.  Section \ref{sec:subcubic} extends results about cubic graphs to subcubic graphs and provides a bound for subcubic graphs based on the number of claw centers.  The paper concludes with Section \ref{sec:conc} where relationships between the zero forcing number and the independence number are shown for {\it any} graph. 


\section{Bounds on Cubic Graphs}\label{sec:cubic}

In this section, Conjecture \ref{theconjecture} is confirmed for specific classes of cubic graphs and demonstrated to be tight for an infinite family of cubic graphs. Additionally, the developed techniques are applied to establish that a slightly weaker upper bound holds for nearly all cubic graphs.

\begin{lemma}\label{lem:allpaths}
    If $G$ is a cubic graph with no components isomorphic to $K_4$, then there exists a maximum independent set $A$ such that the connected components of $G-A$ are all paths.
\end{lemma}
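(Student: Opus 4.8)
The plan is to begin from an arbitrary maximum independent set and repair it by local exchanges. The routine part is a single structural observation: a maximum independent set $A$ is in particular maximal, hence dominating, so every vertex of $G-A$ has at least one neighbor in $A$ and therefore has degree at most $3-1=2$ in $G-A$. Thus $G-A$ is \emph{always} a disjoint union of paths and cycles, and the entire content of the lemma is to choose $A$ so that no cycle survives; equivalently, I want a maximum independent set that meets every cycle of $G$ (a maximum independent set that is simultaneously a feedback vertex set). A useful bookkeeping fact is that the number of edges of $G-A$ equals $\tfrac{3}{2}|V(G)|-3|A|$ for every maximum independent set $A$, since each vertex of $A$ sends all three of its edges into $V(G)\setminus A$. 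Consequently the number of path components of $G-A$ is the same, namely $2|A|-\tfrac12|V(G)|$, for every maximum independent set, and only the number of cycle components can vary.

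I would then run an extremal argument: among all maximum independent sets, choose $A$ minimizing the number of vertices of $G-A$ lying on cycles, and suppose for contradiction that a cycle $C=v_1v_2\cdots v_kv_1$ remains. Each $v_i$ has degree $2$ in $G-A$ and hence a unique neighbor $a_i\in A$. The basic move is the swap $A'=(A\setminus\{a_i\})\cup\{v_i\}$: because $a_i$ is the only neighbor of $v_i$ in $A$, the set $A'$ is again independent and has the same cardinality, so it is again a maximum independent set. Deleting $v_i$ breaks $C$ into a path, while reinserting $a_i$ into $V(G)\setminus A'$ gives it degree exactly $2$ there (its three neighbors are $v_i\in A'$ together with two vertices $x,y\notin A'$). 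A short analysis shows that this swap strictly decreases the chosen potential precisely when reinserting $a_i$ either creates no new cycle (when $x$ and $y$ lie in different components of $(G-A)-v_i$) or creates a cycle strictly shorter than $C$; the danger is that $x$ and $y$ lie in a common component and the re-routed cycle through $a_i$ is as long as, or longer than, $C$.

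The crux — and the step I expect to be the main obstacle — is to prove that a strictly improving swap always exists, which by minimality of $A$ yields the desired contradiction. I would analyze how the neighbors $a_1,\dots,a_k$ attach to $C$: whenever some $a_i$ has its two non-$v_i$ neighbors separated in $(G-A)-v_i$ the swap at $v_i$ destroys $C$ outright, so the obstruction is that for \emph{every} $i$ these neighbors lie together and the resulting cycle has length at least $|C|$. Ruling this out is the technically delicate heart of the argument, and it is exactly here that the exclusion of $K_4$ is used: the extremal obstructed configurations force a cycle vertex together with its three neighbors to have all their incident edges internal, i.e. to form a $K_4$ component. The clean archetype is $k=3$ with $a_1=a_2=a_3=a$, where $\{a,v_1,v_2,v_3\}$ is a $K_4$ component. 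I anticipate that choosing $C$ to be a shortest cycle of $G-A$, together with a careful enumeration of short cycles and of the cases where several cycle vertices share an $A$-neighbor (and, if needed, an exchange along two vertices of $C$), reduces every surviving case to such a $K_4$, which the hypothesis forbids.
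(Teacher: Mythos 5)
Your scaffolding is sound and matches the paper's: the observation that $\Delta(G-A)\le 2$ for any maximum (hence maximal, hence dominating) independent set, the edge count $\tfrac{3}{2}|V(G)|-3|A|$, the extremal choice of $A$, and the basic swap $A'=(A\setminus\{a_i\})\cup\{v_i\}$ are all exactly right, and you have correctly located the difficulty. But the step you defer --- that a strictly improving single swap at a vertex of a shortest cycle, or an exchange along two such vertices, always exists unless a $K_4$ component is present --- is not merely the hard step; it is false as you have framed it. Consider the pattern the paper itself draws in its Figure 1: each cycle vertex $v_i$ has a distinct neighbor $a_i\in A$, and the other two neighbors of $a_i$ are the two endpoints of a long path component $P_i$ of $G-A$, with the $P_i$ pairwise distinct and $|P_i|+1\ge |C|$. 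Every single swap at $v_i$ then destroys $C$ but creates the cycle through $a_i$ and $P_i$, which is at least as long, so neither your potential (number of vertices on cycles) nor the paper's (number of cycles) decreases; an exchange at two vertices of $C$ is worse, producing two such re-routed cycles; no $K_4$ appears anywhere; and taking $C$ of minimum length buys nothing, because the length of $P_i$ is unconstrained by the girth. So no enumeration of bounded local cases around $C$ can close the argument, and your central structural claim --- that obstructed extremal configurations force a $K_4$ component --- does not hold for bounded exchanges.

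What the paper does instead, and what your outline is missing, is an augmenting-chain argument of \emph{unbounded} depth: when the swap at $c_0$ would re-route the cycle through $a_0$ and a path component, one additionally swaps an endpoint $c_1$ of that component with its second $A$-neighbor $a_1$, and so on, taking a longest alternating path $c_0a_0c_1a_1\cdots c_ka_k$ through pairwise distinct components in which $a_{j-1}$ is adjacent to both endpoints of the component containing $c_j$. Performing all these swaps simultaneously leaves each $a_j$ with $j<k$ of degree $1$ in the new complement (both $c_j$ and $c_{j+1}$ now lie in the independent set), so only the terminal vertex $a_k$ can lie on a new cycle; minimality of the cycle count then forces the other two neighbors of $a_k$ into a single component, and maximality of the alternating path together with maximality of $|A|$ yields either a larger independent set or the modified swap $A''=(A\setminus\{a_0,\dots,a_{i-1},a_k\})\cup\{c_0,\dots,c_{i-1},x\}$ with strictly fewer cycles --- the contradiction. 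Note that the $K_4$ hypothesis is consumed only at this terminus (for $K_4$ itself the terminal exchange just regenerates the triangle), not in any local configuration around $C$. Your routine reductions are correct, but the crux requires these chains of arbitrary length, and repairing your proposal essentially amounts to reconstructing them.
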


\begin{proof} Let $A$ be a maximum independent set of $G$ such that the number of cycles in $H = G-A$ is minimized. The connected components of $H$ are paths or cycles since $\Delta(H) \le 2$. Assume $C$ is a cycle in $H$ with $c_0 \in V(C)$ and $a_0\in A$ such that $a_0$ is adjacent to $c_0$.

Let $c_0 a_0 c_1 a_1 \ldots c_k a_k$ be a longest path that starts at $c_0$, alternates between $H$ and $A$, has $deg_{H}(c_i) = 1$ for $0< i \le k$, no two vertices in $\{c_0, c_1, \ldots, c_k\}$ are in the same component of $H$, and $a_{i-1}$ is adjacent to the other degree 1 vertex in the component of $H$ containing $c_{i}$ for $0 < i \le k$ (see Figure \ref{fig:alternate}). 

The set of vertices $A'=(A \setminus \{a_0, a_1, \ldots, a_k\}) \cup \{c_0, c_1, \ldots, c_k\}$ is also an independent set of $G$. 
Since the number of cycles in $G-A'$ does not increase, it must be the same. 
In order for the number of cycles in $G-A'$ to be the same $a_k$ must be in a cycle in $G-A'$.  Therefore, the other two neighbors of $a_k$ must be in the same component of $H$ and at least one of those neighbors has degree 2 in $H$; otherwise, the path $c_0 a_0 c_1 a_1 \ldots c_k a_k$ is not the longest such path. If $x \neq c_k$ is a neighbor of $a_k$ with degree 2 in $G-A$, then $(A \setminus \{a_0, a_1, \ldots, a_k\}) \cup \{c_0, c_1, \ldots, c_k, x\}$ is a larger independent set unless $x$ is adjacent to some $c_i$.  
If $c_ix \in E(H)$ for some $0 \le i \le k$, then $A''=(A \setminus \{a_0, a_1, \ldots, a_{i-1}, a_k\}) \cup \{c_0, c_1, \ldots, c_{i-1}, x\}$ is also a maximum independent set and the number of cycles in $G-A''$ is smaller than the number of cycles in $H$. This contradiction implies that $H$ has no cycle.\end{proof}

\begin{figure}[h!]
    \centering
    \captionsetup{width=0.8\textwidth}
    \begin{tikzpicture}

        \draw (-3,0) ellipse (1cm and 1cm); 
        \node[circle,fill=black,inner sep=0pt,minimum size=4pt,label=below:{$a_0$}] (a0) at (-3,-2) {};
        \node[circle,fill=black,inner sep=0pt,minimum size=4pt,label=above:{$c_0$}] (c0) at (-3,-1) {};

        \node[circle,fill=black,inner sep=0pt,minimum size=4pt,label=above:{}] (c1a) at (-1,0) {};
        \node[circle,fill=black,inner sep=0pt,minimum size=4pt,label=above:{$c_1$}] (c1b) at (-0,0) {};
        \draw (-1,0) node {} -- (-0,0) [dashed] node {};
        \node[circle,fill=black,inner sep=0pt,minimum size=4pt,label=below:{$a_1$}] (a1) at (0,-2) {};

        \node[circle,fill=black,inner sep=0pt,minimum size=4pt,label=above:{}] (c2a) at (1,0) {};
        \node[circle,fill=black,inner sep=0pt,minimum size=4pt,label=above:{$c_2$}] (c2b) at (2,0) {};
        \draw (1,0) node {} -- (2,0) [dashed] node {};
        \node[circle,fill=black,inner sep=0pt,minimum size=4pt,label=below:{$a_2$}] (a2) at (2,-2) {};

         \node[circle,fill=black,inner sep=0pt,minimum size=2pt,label=above:{}] (e1) at (2.5,0) {};
         \node[circle,fill=black,inner sep=0pt,minimum size=2pt,label=above:{}] (e1) at (2.75,0) {};
         \node[circle,fill=black,inner sep=0pt,minimum size=2pt,label=above:{}] (e1) at (3,0) {};
         \node[circle,fill=black,inner sep=0pt,minimum size=4pt,label=below:{$a_{k-1}$}] (ak-1) at (3,-2) {};

         \node[circle,fill=black,inner sep=0pt,minimum size=4pt,label=above:{}] (cka) at (3.5,0) {};
        \node[circle,fill=black,inner sep=0pt,minimum size=4pt,label=above:{$c_k$}] (ckb) at (4.5,0) {};
        \draw (3.5,0) node {} -- (4.5,0) [dashed] node {};
        \node[circle,fill=black,inner sep=0pt,minimum size=4pt,label=below:{$a_k$}] (ak) at (4.5,-2) {};
        
	\Edge(c0)(a0)
        \Edge(c1a)(a0)
        \Edge(c1b)(a0)
        \Edge(a1)(c1b)
        \Edge(a1)(c2b)
        \Edge(a1)(c2a)
        \Edge(a2)(c2b)
        \Edge(ak-1)(cka)
        \Edge(ak-1)(ckb)
        \Edge(ak)(ckb)
	 	
	 \end{tikzpicture}
    \caption{A longest path $c_0a_0c_1a_1\dots c_ka_k$ that alternates between an independent set $A$ and components in $G-A$.}\label{fig:alternate}
\end{figure}
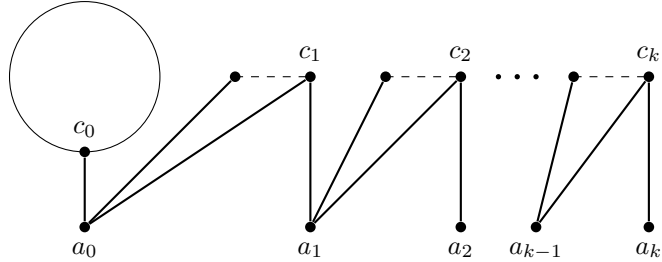

The {\it vertex cover number} of $G$, denoted $\beta(G)$, is the minimum number of vertices needed to cover every vertex of $G$ and the {\it edge cover number} of $G$, denoted $\overline{\beta}(G)$, is the minimum number of edges needed to cover every vertex of $G$. K\"onig's Theorem relates the vertex cover number and the size of a maximum matching in a bipartite graph. It is stated here in terms of $\overline{\beta}(G)$ and $\alpha(G)$. 

\begin{theorem}[\cite{konig1916}]\label{thm:konig}
 If $G$ is a bipartite graph with no isolated vertices, then $\overline{\beta}(G) = \alpha(G)$.
\end{theorem}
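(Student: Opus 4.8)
The plan is to obtain this restatement of K\"onig's theorem from its classical min-max form together with the two Gallai identities. I would treat the equality of the matching number $\nu(G)$ (the size of a maximum matching) and the vertex cover number $\beta(G)$ in bipartite graphs as the content of \cite{konig1916}, and then focus on the short translation into the language of $\alpha(G)$ and $\overline{\beta}(G)$.

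First I would record the Gallai identity relating independence and vertex cover: for any graph $G$ on $n$ vertices, $\alpha(G) + \beta(G) = n$. This is immediate, since the complement of an independent set is exactly a vertex cover and conversely, so a maximum independent set and a minimum vertex cover are complementary sets.

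Second I would record the Gallai identity relating matchings and edge covers: for any graph with no isolated vertices, $\nu(G) + \overline{\beta}(G) = n$. The bound $\overline{\beta}(G) \le n - \nu(G)$ comes from extending a maximum matching to an edge cover by adjoining one incident edge for each of the $n - 2\nu(G)$ unmatched vertices, yielding an edge cover of size $\nu(G) + (n - 2\nu(G)) = n - \nu(G)$. The reverse bound comes from the observation that each component of a \emph{minimum} edge cover is a star; if a star has $k_i$ edges it spans $k_i + 1$ vertices, so with $c$ components one has $\sum_i k_i = \overline{\beta}(G)$ and $\sum_i (k_i+1) = n$, giving $c = n - \overline{\beta}(G)$, and picking one edge from each star produces a matching of that size, whence $\nu(G) \ge n - \overline{\beta}(G)$. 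This star-decomposition step is the only place I expect any real care to be required: one must verify that a minimum edge cover contains no path on three edges and no cycle, and note that the no-isolated-vertices hypothesis is exactly what guarantees an edge cover exists at all.

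Finally, combining the three facts gives $\overline{\beta}(G) = n - \nu(G) = n - \beta(G) = \alpha(G)$, which is the desired identity. Bipartiteness is used only to invoke $\nu(G) = \beta(G)$; the two Gallai identities hold for all graphs, with the no-isolated-vertices condition needed solely so that $\overline{\beta}(G)$ is well defined.
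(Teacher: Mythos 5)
Your proposal is correct, but note that the paper offers no proof of this statement at all: it is quoted as a classical result with a citation to K\"onig's 1916 paper, so there is nothing in-paper to compare against. Your derivation is the standard and sound route to this particular formulation: take K\"onig's min--max theorem in its usual form $\nu(G)=\beta(G)$ for bipartite graphs, then translate via the two Gallai identities $\alpha(G)+\beta(G)=n$ and $\nu(G)+\overline{\beta}(G)=n$ (the latter sometimes attributed to Gallai or to the K\"onig--Rado edge cover theorem). Your argument for the second identity is complete in both directions, and you correctly isolated the one step needing care --- that every component of a \emph{minimum} edge cover is a star, which follows since any edge whose two endpoints are both covered by other edges is redundant, ruling out cycles and paths with three edges. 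You are also right that bipartiteness enters only through $\nu(G)=\beta(G)$ and that the no-isolated-vertices hypothesis is exactly what makes $\overline{\beta}(G)$ well defined; this matches the hypothesis as stated in the paper.
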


A {\it path cover} of a graph $G$ is a set of disjoint induced paths of $G$ that contains each vertex of $G$ and the {\it path cover number} of $G$, denoted $P(G)$, is the number of paths in the smallest path cover of $G$.  
In \cite{pathcover}, it was shown that the path cover number and the zero forcing number are the same for trees. Since both of those graph parameters are additive over connected components that result extends to acyclic graphs.

\begin{theorem}[\cite{pathcover}] \label{thm:Z(T)=P(T)}
  If $T$ is an acyclic graph, then $\Z(T) = P(T)$.
\end{theorem}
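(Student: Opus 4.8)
The plan is to reduce first to the connected case. Both $\Z$ and $P$ are additive over connected components, so it suffices to prove $\Z(T)=P(T)$ when $T$ is a tree, and the general acyclic statement then follows by summing over components. For a tree I would establish the two inequalities separately.

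For the direction $P(T)\le\Z(T)$, which in fact holds for every graph, I would start from a minimum zero forcing set $B$, run the forcing process, and record the forcing chains. Each vertex forces at most one other, so these chains are vertex-disjoint, they cover $V(T)$, and there are exactly $|B|=\Z(T)$ of them (one beginning at each vertex of $B$). Moreover each chain is an \emph{induced} path: if a vertex of a chain were adjacent to a later non-successor on the same chain, it would have had two white neighbors at the moment it performed its force, which is impossible. Hence the chains form a path cover and $P(T)\le\Z(T)$.

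The reverse inequality $\Z(T)\le P(T)$ is the substantive direction, and it is where acyclicity is genuinely used: for graphs with cycles the equality can fail, for instance $\Z(K_4)=3>2=P(K_4)$. Here I would take a minimum path cover $\mathcal P=\{P_1,\dots,P_p\}$ and exhibit a zero forcing set of size $p$ by selecting one endpoint of each path. To control the order of forces, I would contract each path $P_i$ to a single vertex; since the $P_i$ are connected and partition $V(T)$, the quotient is again a tree $\mathcal T$ on $p$ vertices. Rooting $\mathcal T$ induces an order on the paths, and I would orient each $P_i$ (thereby choosing its initially blue endpoint) so that whenever a vertex $x$ on $P_i$ is called upon to force its successor, every neighbor of $x$ lying off $P_i$ has already been colored blue. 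The role of the tree structure is precisely to forbid circular dependencies among the paths, so that such a consistent orientation and ordering exists.

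The main obstacle is exactly this last step: verifying that the chosen endpoints force all of $T$, i.e.\ that no branch vertex is ever left with two white neighbors and stalls the process. I expect to resolve it either through the quotient-tree ordering above or, as a fallback, by strong induction on $|V(T)|$. A leaf $u$ forming a singleton path is immediate, since a minimum zero forcing set of $T-u$ together with $u$ forces $T$ (the blue $u$ blocks no force of $T-u$). The delicate case is when every leaf is the endpoint of a longer path, because deleting such a leaf need not lower $P(T)$ (as already happens for the spider with three legs of length two), so the naive induction loses control of the path count. Showing that one can always either find a leaf whose removal drops $P$ or locally reroute the minimum path cover so that the forcing still propagates will be the crux of the argument.
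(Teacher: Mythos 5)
Your first paragraph already reproduces everything the paper itself does with this statement: the paper does not prove the theorem but cites \cite{pathcover} for the tree case and remarks that $\Z$ and $P$ are additive over connected components, which extends it to acyclic graphs. Your forcing-chains argument for $P(T)\le\Z(T)$ is also complete and correct (and, as you say, valid for all graphs): chains partition the vertex set, there are exactly $|B|$ of them counting the trivial chains of non-forcing blue vertices, and your two-white-neighbors argument correctly shows each chain is induced.

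The genuine gap is the direction $\Z(T)\le P(T)$, which you announce but do not prove: you offer two candidate strategies and explicitly defer the crux (``Showing that one can always either find a leaf whose removal drops $P$ or locally reroute the minimum path cover \ldots will be the crux''). As written this is a proof of one inequality plus a plan for the other. Two further remarks on your plan. First, your quotient-tree idea cannot be executed as a \emph{static} path-level schedule: in the double star (two adjacent degree-$3$ vertices, each with two leaves) with cover $\{a c_1 b,\, d c_2 e\}$, neither path can be forced to completion before the other starts, so no orientation-plus-ordering of whole paths satisfies your condition that all off-path neighbors of $x$ are blue when $x$ forces; the argument must be made at the level of individual forces, allowing paths to stall temporarily and resume. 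Second, the step is genuinely fillable, and in a stronger form than you conjecture: for a tree, \emph{any} choice of one endpoint per path of \emph{any} path cover (minimality is not needed) is a zero forcing set. Restrict attention to along-path forces; if a path stalls, its front vertex is waiting on a white vertex of some other path, and since two disjoint subtrees of a tree are joined by at most one edge, a stalled path can never be waited on through the same pair of paths in return. Hence the ``waits on'' relation traces a non-backtracking walk in the quotient tree, which is a simple path and must terminate at a path that is neither finished nor stalled, i.e.\ one that can perform a force. So a force is always available until all of $T$ is blue. Your intuition that the tree structure forbids circular dependencies is exactly right; the proposal just stops short of turning that intuition into the deadlock-freedom argument that closes the proof.
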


A main result of this paper, Theorem \ref{thm:G-Jforest}, gives an upperbound for the zero forcing number of cubic graphs that is useful for validating Conjecture \ref{theconjecture} in many cases and providing a bound that is slightly weaker in most other cases.

\begin{theorem}\label{thm:G-Jforest} Let $G$ be a cubic graph. If there exists an $S \subseteq V(G)$, such that $G-S$ is acyclic with $c$ components, then \[\Z(G) \le \alpha(G) +\beta(G[S]) + c.\]\end{theorem}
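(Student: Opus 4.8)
The plan is to exhibit a single zero forcing set $B$ whose size decomposes into the three summands, treating $\beta(G[S])$, $\alpha(G)$, and $c$ as three separate contributions. First I would split $S$ using a minimum vertex cover $C$ of $G[S]$, so that $|C| = \beta(G[S])$ and $I := S \setminus C$ is independent in $G[S]$; the point of this split is that every vertex of $I$ has all of its neighbors in $C \cup V(F)$, where $F := G - S$ is the forest with components $T_1,\dots,T_c$. I would place $C$ in $B$ outright (this is the $\beta(G[S])$ term); its role is to ``shield'' the later forcing from the non-forest edges inside $S$, since once $C$ is blue the only white non-forest vertices are the independent set $I$.

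For the forest contribution I would not use a minimum path cover directly, but a matching-based one. Since $F$ is acyclic it is bipartite, so König's Theorem (Theorem~\ref{thm:konig}) together with the Gallai identities shows that a maximum matching of $F$ together with its unmatched vertices (which are necessarily independent) partitions $V(F)$ into exactly $\alpha(F)$ pieces, each either a single vertex or a matched edge. Seeding one endpoint of each piece gives a zero forcing set of the \emph{isolated} forest $F$ of size $\alpha(F)$; combined with Theorem~\ref{thm:Z(T)=P(T)} this is precisely the inequality $P(F) \le \alpha(F) \le \alpha(G)$, and it is these $\alpha(F)$ seeds that I would add to $B$ to account for the $\alpha(G)$ term. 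The remaining $c$ vertices of the budget I would hold in reserve as one extra ``starter'' per component $T_i$, to be spent only when the forcing inside that component cannot get going from the matching seeds alone.

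The substance of the proof is then to check that $B = C \cup B_F \cup \{\text{starters}\}$ actually forces all of $G$, and the interaction between $F$ and $I$ is where I expect the real difficulty. A blue forest vertex that would force along a chain in the isolated forest can be blocked in $G$ by a still-white neighbor in $I$, while the vertices of $I$ can only be forced from $F$ or from $C$; this is a genuine circular dependency, and it is the reason one cannot simply colour all of $I$ outright (the inequality $\alpha(F) + \alpha(G[S]) \le \alpha(G) + c$ needed to afford that is false in general). I would resolve the dependency by ordering the forces so that each forest chain first blues its forest vertices and then ``spills'' into the adjacent vertices of $I$: once a forest vertex has every forest-neighbor blue, its only remaining white neighbors lie in $I$, and with $C$ already blue these can be forced one at a time, after which the shielded forest forcing resumes; the per-component starters absorb exactly the components that cannot begin from their matching seeds because of white $I$-neighbors.

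The step I expect to be most delicate, and where the cubic hypothesis is essential, is the bookkeeping showing the deadlock-breaking never costs more than the reserved $c$ vertices and that no vertex of $I$ is permanently stuck. Because each forest vertex has degree exactly $3$ in $G$, a leaf of $T_i$ has exactly two neighbors in $S$ and an internal path vertex exactly one, so the blue set $C$ together with the already-forced vertices supplies enough ``unique white neighbor'' configurations to keep the process alive; the danger cases are leaves whose \emph{two} $S$-neighbors both lie in $I$ and are both still white. Turning this local degree count into a global guarantee that the forcing terminates with every vertex blue while the total seed count stays at $\alpha(F) + \beta(G[S]) + c \le \alpha(G) + \beta(G[S]) + c$ is the crux, and I would formalize it by induction on the number of white vertices, maintaining the invariant that every white vertex lies on an as-yet-incomplete forcing chain whose initial vertex belongs to $B$.
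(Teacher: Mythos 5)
There is a genuine gap, and it sits exactly where you flagged it: the claim that the reserve of $c$ extra starters suffices to break every deadlock between the blue forest and the white independent part $I$ of $S$ is asserted, never proved, and is not controlled by $c$. The stalls you describe are caused by leaves of $F$ whose two $S$-neighbors lie in $I$ and are both white; an $I$-vertex in turn waits for some neighbor in $F\cup C$ to have it as a unique white neighbor. These blockages form mutual dependencies (e.g., an alternating cycle $x_1w_1x_2w_2\cdots x_mw_mx_1$ in the leaf--$I$ incidence structure, where each leaf $x_j$ retains two white $I$-neighbors and each blue $C$-vertex has several white neighbors and so never forces), and each such cycle consumes a fresh seed. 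A single tree component of $F$ can have arbitrarily many leaves and hence arbitrarily many disjoint blocking cycles while contributing only $1$ to $c$, so ``one starter per component'' is the wrong currency. Note also that your scheme, if it worked, would prove the stronger bound $\Z(G)\le \beta(G[S])+\alpha(F)+c$ (since $\alpha(F)\le\alpha(G)$), which should itself raise suspicion; and ``induction on the number of white vertices maintaining the invariant that every white vertex lies on an incomplete chain'' names a proof shape without supplying the step that would bound the seed count.

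The paper avoids this entire dynamic analysis: its proof is static. It colors \emph{all} of $S$ blue, so there is no $F$--$I$ interaction whatsoever, and $S\cup B_F$ forces $G$ whenever $B_F$ forces $F=G-S$. The accounting you (correctly, under your pairing) rejected is rescued by using a smaller forest-side independent set than $\alpha(F)$: let $H$ be the bipartite graph on the degree-$3$ vertices of $F$ (isolated vertices given a pendant neighbor), take a maximum independent set $A$ of $H$ consisting of degree-$3$ vertices of $F$ and a minimum edge cover $E_H$ with $|A|=|E_H|$ by Theorem~\ref{thm:konig}; after sliding the edges of $E_H$ into $F$, the graph $F-E_H$ has maximum degree at most $2$ and is a path cover with $|E_H|+c$ paths, so $\Z(F)=P(F)\le |A|+c$ by Theorem~\ref{thm:Z(T)=P(T)}. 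The cubic hypothesis enters exactly once and in one line: a vertex of degree $3$ in $F$ has all three neighbors in $F$, so $A$ has no neighbors in $S$, hence $A\cup S_1$ is independent in $G$ (where $S_1=I$ and $S_2$ is a minimum vertex cover of $G[S]$) and $|A|+|S_1|\le\alpha(G)$. Then $\Z(G)\le |S|+\Z(F)\le |S_2|+\big(|S_1|+|A|\big)+c\le \beta(G[S])+\alpha(G)+c$. In short: the inequality you observed to be false refutes only the pairing of $\alpha(F)$ with $\alpha(G)$; the correct move is to charge $I$ against $\alpha(G)$ jointly with $A$, not to leave $I$ white and pay for it with a dynamic argument.
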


\begin{proof} 
Assume there exists an $S \subseteq V(G)$, such that $F=G-S$ is acyclic with $c$ components. If $B$ is a zero forcing set of $F$, then $S \cup B$ is a zero forcing set of $G$. So $Z(G) \le |S| + Z(F)$. 

Let $H$ be the bipartite graph created by taking the graph induced by the degree $3$ vertices of $F$ and adding a neighbor to any isolated vertices. 
 Each vertex of $H$ that is not degree $3$ in $F$ must be in a component of $H$ that is isomorphic to $K_2$ and adjacent to a vertex of degree 3 in $F$. 
 Therefore, there exists a maximum independent set of $H$ that only contains degree $3$ vertices of $F$.
 Let $A$ be such an independent set of $H$ and $E_H$ be the smallest edge cover of $H$. 
By Theorem \ref{thm:konig}, $|A| = |E_H|$. 
For each edge of $E_H$ that is not in $F$, replace it with an edge in $F$ that is incident to the same degree $3$ vertex in $F$.
Since $E_H$ covers all of the degree $3$ vertices in $F$, $\Delta(F-E_H) \le 2$. 
Therefore, $F-E_H$ is a path cover of $F$, since $F$ is acyclic.  
Let $k$ be the number of components of the acyclic graph $F-E_H$, so $k = |E_H| + c$ and $P(F) \le k$. 
By Theorem \ref{thm:Z(T)=P(T)}, 
\[\Z(F) = P(F) \le k = |E_H| + c \le |A| + c.\]


The vertices of $S$ can be partitioned into two sets, $S_1$ and $S_2$, such that $S_1$ is an independent set of $G[S]$ and $S_2$ is a vertex cover of $G[S]$. Each vertex in $A$ has degree $3$ in $F$, so there are no edges between $A$ and $S$. Therefore, $A \cup S_1$ is an independent set of $G$ and $|A| + |S_1| \le \alpha(G)$. 
Hence,

\[Z(G) \le |S| + Z(F) \le |S_1| + |S_2| + |A| + c \le \alpha(G) +\beta(G[S]) + c.\]

\end{proof}


\begin{corollary}\label{thm:3alpha}
    If $G$ is a cubic graph with no components isomorphic to $K_4$, then

    \[\Z(G) \le 3\alpha(G) - \dfrac{n}{2}.\]
\end{corollary}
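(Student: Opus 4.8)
The plan is to invoke Theorem~\ref{thm:G-Jforest} with the set $S$ chosen to be a maximum independent set of the special form guaranteed by Lemma~\ref{lem:allpaths}, and then to pin down the resulting error term $c$ by a global edge count.

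First I would apply Lemma~\ref{lem:allpaths} to obtain a maximum independent set $A$ with $|A| = \alpha(G)$ such that $G - A$ is a disjoint union of paths; in particular $G - A$ is acyclic. Taking $S = A$ in Theorem~\ref{thm:G-Jforest} gives $\Z(G) \le \alpha(G) + \beta(G[A]) + c$, where $c$ is the number of path components of $G - A$. Since $A$ is independent, $G[A]$ has no edges, so $\beta(G[A]) = 0$ and the bound reduces to $\Z(G) \le \alpha(G) + c$. It therefore suffices to show $c = 2\alpha(G) - \tfrac{n}{2}$.

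To evaluate $c$, I would count the edges between $A$ and $W := V(G) \setminus A$ in two ways. Because $G$ is cubic and $A$ is independent, every one of the $3\alpha(G)$ edges incident to $A$ has its other endpoint in $W$, so there are exactly $3\alpha(G)$ such edges. Counting instead from the $W$ side, each $w \in W$ contributes $3 - \deg_{G-A}(w)$ edges to $A$; summing and using that a disjoint union of $c$ paths on $|W|$ vertices has $|W| - c$ edges yields
\[ \sum_{w \in W}\bigl(3 - \deg_{G-A}(w)\bigr) = 3|W| - 2\bigl(|W| - c\bigr) = |W| + 2c. \]
Equating the two counts gives $|W| + 2c = 3\alpha(G)$, and substituting $|W| = n - \alpha(G)$ and solving produces $c = 2\alpha(G) - \tfrac{n}{2}$. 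Plugging this into $\Z(G) \le \alpha(G) + c$ gives the claimed inequality $\Z(G) \le 3\alpha(G) - \tfrac{n}{2}$.

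The only real content beyond bookkeeping is the double count in the third step; the observations that make it clean are that independence of $A$ forces all of its incident edges out to $W$ (giving the exact value $3\alpha(G)$) and that cubicity converts the path-degrees in $G-A$ into a precise tally of edges back to $A$. Once those are in place the identity for $c$ is forced, so I do not anticipate a genuine obstacle; the one point to handle with care is the presence of isolated vertices of $G-A$ (single-vertex paths), but the edge-count formula $|W| - c$ remains valid in that case as well.
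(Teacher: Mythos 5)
Your proposal is correct and follows essentially the same route as the paper: apply Lemma~\ref{lem:allpaths} to get a maximum independent set $A$ with $G-A$ a union of paths, invoke Theorem~\ref{thm:G-Jforest} with $S=A$ (so $\beta(G[S])=0$), and count the path components as vertices minus edges of $G-A$. Your double count of the edges between $A$ and $V(G)\setminus A$ is just a repackaging of the paper's direct computation $|E(G-A)| = \frac{3n}{2} - 3\alpha(G)$, yielding the identical value $c = 2\alpha(G) - \frac{n}{2}$.
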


\begin{proof}
Let $A$ be an independent set of $G$ such that each component $G-A$ is a path. 
Such an $A$ exists by Lemma \ref{lem:allpaths} and $G-A$ is acyclic.
Since every component of $G-A$ is a path, the number of components in $G-A$ is equal to $|V(G-A)| - |E(G-A)|$. 
Further, $G-A$ has $n-\alpha(G)$ vertices and $\dfrac{3n}{2} - 3\alpha(G)$ edges. 
Therefore, $G-A$ has $n-\alpha(G) - \left(\dfrac{3n}{2} - 3\alpha(G)\right) =$ $2\alpha(G) - \dfrac{n}{2}$ components. 
Applying Theorem \ref{thm:G-Jforest} yields $Z(G) \le \alpha(G) + 2\alpha(G) - \dfrac{n}{2}$.
\end{proof}

Theorem \ref{thm:G-Jforest} provides a very useful method of bounding bounding the zero forcing number in terms of the independence number. 
In particular, if the set $S$ in Theorem \ref{thm:G-Jforest} is independent and $G-S$ is a tree, then $G$ satisfies Conjecture \ref{theconjecture} because $\beta(G[S])=0$. 
A slightly weaker bound can be attained if $S$ is near independent since $\beta(G[S])=1$. Partitioning a graph in this way is known as decycling the graph.
Given a graph $G$, a set $S\subseteq V(G)$ is a {\it decycling set} if $G-S$ is acyclic.  A {\it decycling partition} of a connected graph $G$ is a partition $\{R,S\}$ of $V(G)$ such that $S$ is a decycling set for $G$ and $R = V(G)\backslash S$.
The {\it decycling number} of a graph $G$, denoted $\phi(G)$, is the minimum number of vertices in a decycling set.

In \cite{decycling} a strong connection between upper-embeddability and decycling sets in cubic graphs is shown.  A {\it cellular embedding} of a graph $G$ on a closed, orientable surface $\mathcal{S}$ is a drawing of $G$ on $\mathcal{S}$ where no edges cross and each face is homeomorphic to a disc. 
The maximum genus of a graph $G$, denoted by $\gamma_M(G)$, is the largest genus of a closed, orientable surface onto which $G$ has a cellular embedding.  
Graphs for which $\gamma_M(G) = \left\lfloor \dfrac{|E(G)| - |V(G)| + 1}{2}\right\rfloor$ are called {\it upper-embeddable}.  
Upper-embeddable graphs can be partitioned into graphs that have a cellular embedding on an orientable surface with one or two faces, respectively called {\it orientably one-face-embeddable} and {\it orientably two-face-embeddable}.  
In \cite{decyclegenus} $\phi(G) + \gamma_M(G) = n/2+1$ is proved for all cubic graphs.
This result has led to a characterization of cubic graphs based on independent and near independent decycling sets and embeddability.
A more in-depth discussion of decycling and embeddability can be found in \cite{decycling}.




\begin{theorem}[\cite{decycling}]\label{thm:decycle4.2}
For a connected cubic graph $G$, the following results are equivalent.
\begin{enumerate}
    \item[(i)] $G$ is orientably one-face-embeddable.
    
    \item[(ii)] The vertex set of $G$ can be partitioned into two sets $R$ and $S$ such that $R$ induces a tree and $S$ is independent.
\end{enumerate}

\end{theorem}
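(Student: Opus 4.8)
The plan is to convert both implications into statements about the maximum genus $\gamma_M(G)$ and settle them using the identity $\phi(G) + \gamma_M(G) = n/2 + 1$ from \cite{decyclegenus} together with Euler's formula. Write $n = |V(G)|$; since $G$ is cubic, $|E(G)| = 3n/2$ and the cycle rank is $n/2 + 1$. For a cellular embedding with $f$ faces and genus $g$, Euler's formula gives $f = n/2 + 2 - 2g$. As $f \ge 1$ this yields the standard bound $\gamma_M(G) \le \lfloor (n/2+1)/2 \rfloor$; moreover any one-face embedding has genus $(n+2)/4$, which is the largest genus available, so $G$ is orientably one-face-embeddable if and only if $\gamma_M(G) = (n+2)/4$ (which already forces $n \equiv 2 \pmod 4$).

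The combinatorial core is a degree count valid for every decycling partition. If $\{R,S\}$ partitions $V(G)$ with $G[R]$ acyclic, write $p$ for the number of components of $G[R]$ and $e_S$ for the number of edges inside $S$. Counting degrees over $R$ and using that $G[R]$ has $|R| - p$ edges shows there are $|R| + 2p$ edges between $R$ and $S$, and then counting all $3n/2$ edges of $G$ gives $e_S = 2|S| - n/2 - p$. Consequently $R$ induces a tree and $S$ is independent (that is, $p = 1$ and $e_S = 0$) if and only if $|S| = (n+2)/4$; and because $e_S \ge 0$ and $p \ge 1$, a decycling set of the minimum possible size $(n+2)/4$ can only have $p = 1$ and $e_S = 0$.

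Both implications now fall out. For (ii) $\Rightarrow$ (i), the partition has $p = 1$ and $e_S = 0$, so the count forces $|S| = (n+2)/4$; as $S$ is a decycling set, $\phi(G) \le (n+2)/4$, and the identity gives $\gamma_M(G) \ge (n+2)/4$, which together with the Euler bound yields $\gamma_M(G) = (n+2)/4$ and hence a one-face embedding. For (i) $\Rightarrow$ (ii), one-face-embeddability gives $\gamma_M(G) = (n+2)/4$, so the identity gives $\phi(G) = (n+2)/4$; choosing any minimum decycling set $S$ and substituting $|S| = (n+2)/4$ into the count gives $e_S + p = 1$, and since $R = V(G)\setminus S \ne \emptyset$ (as $|S| = (n+2)/4 < n$) forces $p \ge 1$, we obtain $p = 1$ and $e_S = 0$, i.e. $R$ induces a tree and $S$ is independent.

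The genuinely hard content is imported rather than proved here: everything rests on $\phi(G) + \gamma_M(G) = n/2 + 1$, whose proof runs through Xuong's characterization of maximum genus and a careful analysis of the cotree of a spanning tree, and this is precisely the obstacle that \cite{decyclegenus} overcomes. Granting that identity, the only point requiring care is the Euler-formula criterion of the first paragraph: one must track the parity of the cycle rank to be certain that the extremal value $\gamma_M(G) = (n+2)/4$ produces a single face rather than two, and note that integrality of $(n+2)/4$ silently enforces $n \equiv 2 \pmod 4$, consistent with both conditions failing otherwise. One could instead argue directly, building from the partition a spanning tree whose cotree pairs into one face via Xuong's theorem, but this merely reconstructs the machinery that the cited identity already packages.
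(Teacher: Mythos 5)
Your proposal is correct as far as it goes, but one point of orientation first: the paper contains no proof of this statement to compare against --- Theorem~\ref{thm:decycle4.2} is imported verbatim from \cite{decycling}, just as the identity $\phi(G)+\gamma_M(G)=n/2+1$ is imported from \cite{decyclegenus}. So what you have produced is an independent derivation, and it checks out. Your reduction is sound: Euler's formula pins the face count of a cellular embedding of a cubic graph to $f=n/2+2-2g$, so orientable one-face-embeddability is exactly the numerical condition $\gamma_M(G)=(n+2)/4$; your edge count for a decycling partition, $e_S=2|S|-n/2-p$ with $p\ge 1$ and $e_S\ge 0$, shows that $R$ induces a tree with $S$ independent exactly when $|S|=(n+2)/4$, and moreover that \emph{every} minimum decycling set of such a graph automatically has this form (the rigidity $e_S+p=1$); the Huang--Liu identity then bridges the two sides. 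Your parity worry resolves itself: if $n\not\equiv 2\pmod 4$ then $(n+2)/4$ is not an integer and both (i) and (ii) fail, consistently. What your route buys is economy and transparency --- indeed the same count with $e_S+p=2$ immediately yields both alternatives of Theorem~\ref{thm:decycle4.3} (either $p=1,e_S=1$, a tree with a near independent set, or $p=2,e_S=0$, a two-component forest with an independent set), so the one-face/two-face dichotomy falls out of a single computation. What it costs is that all the genuinely topological content is concentrated in the cited identity $\phi(G)+\gamma_M(G)=n/2+1$, which encodes Xuong's characterization of maximum genus; the source \cite{decycling} works structurally and constructively with the embeddings themselves, which is what produces the finer classification there. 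Within the present paper's frame of reference --- where the Huang--Liu identity is explicitly quoted as known --- your argument is a legitimate and complete proof.
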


\begin{theorem}[\cite{decycling}] \label{thm:decycle4.3}
For a connected cubic graph $G$, the following results are equivalent.
\begin{enumerate}
    \item[(i)] $G$ is orientably two-face-embeddable.
    \item[(ii)] The vertex set of $G$ can be partitioned into two sets $R$ and $S$ such that either
        \begin{enumerate}
            \item[1.] $R$ induces a tree and $S$ is near independent, or
            \item[2.] $R$ induces a forest with two components and $S$ is  independent.
        \end{enumerate}
\end{enumerate}

\end{theorem}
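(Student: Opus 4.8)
My plan is to route the equivalence through Euler's formula and the maximum genus, using the classical structure of the cotree of a spanning tree. Write $\rho(G) = |E(G)| - |V(G)| + 1$ for the cycle rank; for a cubic graph $\rho(G) = \tfrac{3n}{2} - n + 1 = \tfrac{n}{2}+1$. For a cellular embedding of genus $g$ with $f$ faces, Euler's formula gives $f = \rho(G) + 1 - 2g$, so on a maximum genus surface an upper-embeddable graph has $f=1$ when $\rho(G)$ is even and $f=2$ when $\rho(G)$ is odd. Thus condition (i) is equivalent to ``$\rho(G)$ is odd and $\gamma_M(G) = \tfrac{\rho(G)-1}{2}$.'' I would then invoke Xuong's classical theorem, $\gamma_M(G) = \tfrac{1}{2}(\rho(G) - \xi(G))$, where the Betti deficiency $\xi(G)$ is the minimum over spanning trees $T$ of the number of components of the cotree $G - E(T)$ that have an odd number of edges. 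Two useful elementary facts power everything: first, because $G$ is cubic every vertex has cotree-degree $3 - \deg_T(v) \le 2$, so every cotree is a disjoint union of paths and cycles; second, a connected graph with $m$ edges decomposes into $\lfloor m/2\rfloor$ adjacent edge-pairs together with one leftover edge exactly when $m$ is odd. Consequently, the number of odd cotree components equals the minimum number of leftover edges in an adjacent-edge pairing, and condition (i) becomes: \emph{there is a spanning tree whose cotree admits an adjacent-edge pairing leaving exactly one edge unpaired.}

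For the direction (ii) $\Rightarrow$ (i) I would build the spanning tree and the pairing explicitly in each case. In Case 2 ($G[R]$ a forest with two tree components $R_1,R_2$ and $S$ independent), connectivity of $G$ forces some $z\in S$ adjacent to both $R_1$ and $R_2$ (otherwise $S$ splits as $S_1\sqcup S_2$ and $G$ disconnects along $(R_1\cup S_1, R_2\cup S_2)$). Take $T$ to be all forest edges, one $R$-connector edge for each $S$-vertex, and a second connector at $z$ bridging $R_1$ to $R_2$; this has $n-1$ edges and is a tree. Every $S$-vertex except $z$ then has its two remaining (cotree) edges incident to it, which I pair, while $z$ contributes a single unpaired cotree edge, so exactly one edge is left over. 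In Case 1 ($G[R]$ a tree and $S$ near independent with single internal edge $xy$), take $T$ to be the tree on $R$ plus one connector per $S$-vertex; the cotree is the edge $xy$ together with the remaining $R$–$S$ edges. I pair the two cotree edges at each $S$-vertex other than $x,y$, pair $xy$ with $x$'s remaining cotree edge, and am left with $y$'s single $R$–$S$ cotree edge unpaired. In both cases the pairing is adjacent and leaves exactly one edge, so $\xi(G) \le 1$; since $\rho(G) = \tfrac{n}{2}+1$ is odd here (both cases force $n\equiv 0 \pmod 4$), parity gives $\xi(G)=1$ and hence a two-face embedding.

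For the converse (i) $\Rightarrow$ (ii) I would start from a spanning tree $T$ realizing $\xi(G)=1$, so the cotree (a union of paths and cycles) has exactly one component with an odd number of edges. The goal is to read off the partition: I would normalize $T$, using tree--cotree exchanges, until the cotree becomes a disjoint union of paths of length two (``cherries'') centered at a set $S$, plus one residual edge, and then set $R = V(G)\setminus S$. The single odd component produces exactly two structural outcomes — the residual edge either has both endpoints in $S$, yielding $S$ near independent with $G[R]$ a single tree (Case 1), or it bridges two otherwise separate tree components of $G[R]$, yielding $S$ independent with $G[R]$ a two-component forest (Case 2) — and the counting identities ($n\equiv 0\pmod 4$, $\rho(G)$ odd) must be used to certify that $G[R]$ has exactly the claimed number of components and no hidden cycles.

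The main obstacle is precisely this normalization step in (i) $\Rightarrow$ (ii): showing that an arbitrary maximum-genus spanning tree can always be transformed into one whose cotree is ``cherries plus one leftover,'' so that the centers form a genuine decycling set $S$ with $G-S$ a forest, that $S$ is independent or near independent as dictated by where the lone odd component sits, and that exactly one of the two alternatives occurs. Controlling the face count during each exchange and the parity bookkeeping that separates the two sub-cases is the delicate part; the forward direction, by contrast, is the routine explicit construction sketched above.
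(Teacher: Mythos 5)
The paper itself offers no proof of this statement---it is quoted verbatim from the cited source \cite{decycling}---so your attempt can only be judged on its own merits. The framework you set up is sound: Euler's formula correctly turns orientable two-face-embeddability into ``$\rho(G)$ odd and $\gamma_M(G)=\tfrac{\rho(G)-1}{2}$,'' Xuong's theorem correctly turns that into $\xi(G)=1$, and in a cubic graph the cotree of any spanning tree is indeed a disjoint union of paths and cycles, so $\xi(G)=1$ is equivalent to a spanning tree whose cotree admits an adjacent-edge pairing with exactly one leftover edge. Your (ii)$\Rightarrow$(i) direction checks out in detail: in both cases the edge count gives $|S|=\tfrac{n}{4}+1$, so $n\equiv 0 \pmod 4$ and $\rho(G)=\tfrac{n}{2}+1$ is odd, and your explicit trees and pairings leave exactly one edge unpaired, so parity upgrades $\xi(G)\le 1$ to $\xi(G)=1$.

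The genuine gap is the direction (i)$\Rightarrow$(ii), which you acknowledge is only a plan---but the problem is worse than missing bookkeeping: the normal form you propose to normalize toward is provably the \emph{wrong} target. Suppose the cotree were literally a disjoint union of cherries centered at $S$ plus one vertex-disjoint residual edge $e$. Each center then has two cotree edges, hence tree-degree $1$, i.e., is a leaf of $T$; both endpoints of $e$ have cotree-degree $1$, hence are not centers and lie in $R=V(G)\setminus S$. Deleting the leaf-centers from $T$ leaves a spanning tree of $R$, so $G[R]\supseteq (T-S)+e$ contains a cycle and (ii) fails. Your own forward constructions already reveal the correct shapes: in Case 1 the odd cotree component is a path of length three $r$--$x$--$y$--$r'$ whose \emph{middle} edge $xy$ lies inside $S$ (not a cherry plus a stray edge), and in Case 2 the residual edge has one endpoint $z\in S$ of tree-degree $2$, where independence of $S$ further requires that neither tree-neighbor of $z$ be a cherry center. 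So the normalization lemma must steer the tree--cotree exchanges toward these two specific configurations, distinguish which one occurs, and verify the adjacency constraints along the way---this is precisely the content of the theorem in \cite{decycling}, and it is absent from the proposal. As written, the equivalence is established in one direction only.
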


Theorems \ref{thm:decycle4.2} and \ref{thm:decycle4.3} provide decycling partitions of cubic graphs that allow Theorem \ref{thm:G-Jforest} to be applied, giving the following result.

\begin{corollary}\label{cor:onefacetwoface}
    Let $G$ be a connected cubic graph.
    
        \begin{enumerate}
            \item  If $G$ is orientably one-face-embeddable, then $\Z(G) \le \alpha(G) + 1$.
            \item   If $G$ is orientably two-face-embeddable, then $\Z(G) \le \alpha(G) + 2$.
    
        \end{enumerate}
\end{corollary}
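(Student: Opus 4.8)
The plan is to combine the structural decycling partitions guaranteed by Theorems~\ref{thm:decycle4.2} and~\ref{thm:decycle4.3} with the zero forcing bound of Theorem~\ref{thm:G-Jforest}. The key observation is that each structural description of a partition $\{R,S\}$ directly supplies the two quantities $\beta(G[S])$ and $c$ (the number of components of $G-S$) appearing in the bound $\Z(G)\le\alpha(G)+\beta(G[S])+c$. Since $G-S$ is exactly the induced subgraph $G[R]$, controlling whether $R$ induces a tree or forest controls $c$, and controlling whether $S$ is independent or near independent controls $\beta(G[S])$.

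For the one-face case I would invoke Theorem~\ref{thm:decycle4.2} to obtain a partition in which $R$ induces a tree and $S$ is independent. Because a tree is connected and acyclic, $G-S=G[R]$ is acyclic with $c=1$ component, and because $S$ is independent, $G[S]$ has no edges, so $\beta(G[S])=0$. Feeding these values into Theorem~\ref{thm:G-Jforest} gives $\Z(G)\le\alpha(G)+0+1=\alpha(G)+1$, as claimed.

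For the two-face case, Theorem~\ref{thm:decycle4.3} offers two possible partitions, and I would treat each separately. In the first, $R$ induces a tree so $c=1$, while $S$ is near independent; since a near independent set induces exactly one edge, that single edge is covered by one vertex, giving $\beta(G[S])=1$ and hence the bound $\alpha(G)+1+1$. In the second, $R$ induces a forest with two components so $c=2$, while $S$ is independent so $\beta(G[S])=0$, giving the bound $\alpha(G)+0+2$. Either possibility yields $\Z(G)\le\alpha(G)+2$.

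I do not expect a genuine obstacle here: the argument is essentially bookkeeping, since the conceptual work already resides in Theorem~\ref{thm:G-Jforest} and in the embeddability characterizations. The only care required is in faithfully translating ``$R$ induces a tree/forest with $c$ components'' into the component count for $G-S$ and ``independent/near independent'' into the vertex cover number $\beta(G[S])$, after which the two corollary bounds fall out immediately.
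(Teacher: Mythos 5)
Your proposal is correct and matches the paper's intended argument exactly: the paper derives this corollary precisely by feeding the decycling partitions of Theorems~\ref{thm:decycle4.2} and~\ref{thm:decycle4.3} into Theorem~\ref{thm:G-Jforest}, with the same translations $c=1$ or $c=2$ from the tree/two-component forest on $R$ and $\beta(G[S])=0$ or $\beta(G[S])=1$ from $S$ being independent or near independent. Your handling of both alternatives in the two-face case is exactly the required bookkeeping, and there is nothing to add.
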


Corollary \ref{cor:onefacetwoface} states that if $G$ is an upper-embeddable connected cubic graph, then $\Z(G) \le \alpha(G) + 2$. 
Theorems \ref{thm:decycle1} and \ref{thm:decycle2} establish that almost every cubic graph is upper-embeddable. 
 Let $\mathbb{P}\{E\}$ denote the probability of event $E$ occurring. 
 For a sequence of probability spaces $\Omega_n, n\ge 1$, an event $E_n$ of $\Omega_n$ occurs asymptotically almost surely, shortened to a.a.s.,  if $\displaystyle\lim_{n\to\infty} \mathbb{P}\{E_n\} = 1$.

    \begin{theorem}[\cite{decyclerandom}]\label{thm:decycle1}
        A random cubic graph $G$ on $n$ vertices, a.a.s.\ satisfies

        \[\phi(G) = \left\lceil \dfrac{n + 2}{4}\right\rceil.  \]

    \end{theorem}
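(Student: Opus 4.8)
The plan is to route the entire problem through the maximum-genus identity $\phi(G)+\gamma_M(G)=n/2+1$ for cubic graphs (proved in \cite{decyclegenus} and recalled above). Since a random cubic graph is a.a.s.\ connected, it suffices to analyze $\gamma_M(G)$. First I would record a deterministic lower bound on $\phi$: for a connected cubic graph on $n$ vertices the cycle rank is $\beta(G)=|E|-|V|+1=n/2+1$, and Euler's formula applied to a cellular embedding of genus $g$ (with at least one face) gives $2-2g=|V|-|E|+|F|\ge n-\tfrac{3n}{2}+1$, whence $2g\le 1+n/2$ and $g\le\lfloor\beta(G)/2\rfloor=\lfloor(n+2)/4\rfloor$. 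Thus $\gamma_M(G)\le\lfloor(n+2)/4\rfloor$ for \emph{every} connected cubic graph, and the identity converts this into $\phi(G)\ge n/2+1-\lfloor(n+2)/4\rfloor=\lceil(n+2)/4\rceil$, holding deterministically. So the theorem reduces to the reverse inequality a.a.s., i.e.\ $\gamma_M(G)=\lfloor(n+2)/4\rfloor$ a.a.s.\ — equivalently, that a random cubic graph is a.a.s.\ upper-embeddable.

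To bound $\gamma_M$ from below I would invoke Xuong's characterization of maximum genus: for a connected graph, $\gamma_M(G)=\tfrac12(\beta(G)-\xi(G))$, where the Betti deficiency $\xi(G)$ is the minimum, over all spanning trees $T$, of the number of components of the cotree subgraph $G-E(T)$ (taken on all of $V(G)$) that have an odd number of edges. Because every such count is congruent to $\beta(G)\pmod 2$, upper-embeddability is exactly the statement $\xi(G)\le 1$. Hence it suffices to exhibit, a.a.s., a \emph{single} spanning tree $T$ whose cotree has at most one odd component. A convenient sufficient condition is that the nontrivial part of $G-E(T)$ — the components carrying at least one edge — forms a single component, since isolated vertices carry zero edges and are harmless, and one component yields $\xi(G)\le 1$.

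The heart of the argument, and the step I expect to be the main obstacle, is the a.a.s.\ construction of such a spanning tree in the pairing (configuration) model, transferred to the uniform model by the usual contiguity (the pairing produces a simple graph with probability bounded away from $0$, so a.a.s.\ properties carry over). I would exploit the standard a.a.s.\ features of random cubic graphs: connectivity, indeed $3$-connectivity, good vertex expansion, and the fact that the number of short cycles is bounded in probability with an approximately Poisson law. The cotree is a sparse subgraph with $\beta(G)=n/2+1$ edges; using expansion I would grow $T$ by a breadth-first/greedy exploration that reserves edges so as to keep the complementary edge set connected, and then repair the few residual defects by local tree/cotree edge swaps. The delicate point is the \emph{parity} bookkeeping: expansion readily yields few cotree components, but guaranteeing that all but at most one have even edge-count requires handling the constantly-many short-cycle obstructions explicitly, showing each can be absorbed by a bounded swap that does not create a new odd component.

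As a cross-check, and a possible alternative to the tree surgery, one can instead attack $\phi$ directly by a second-moment computation in the pairing model: the deterministic lower bound already pins $\phi$ from below, so it is enough to show that the expected number of decycling sets of size $\lceil(n+2)/4\rceil$ — equivalently, induced forests on $n-\lceil(n+2)/4\rceil$ vertices that absorb the maximal number of edges — is large and concentrated, giving existence a.a.s. Either route hinges on the same essential fact, that the largest induced forest in a random cubic graph is a.a.s.\ as large as the edge count permits; the genus reformulation is attractive because it isolates the sole obstruction into a single parity invariant, $\xi(G)\le 1$, rather than a tight extremal set-counting problem.
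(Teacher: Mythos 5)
You should first be aware that the paper contains no proof of this statement: Theorem \ref{thm:decycle1} is quoted verbatim from \cite{decyclerandom}, so your proposal has to stand entirely on its own. Its deterministic half is fine: the Euler-formula bound $\gamma_M(G)\le\lfloor(n+2)/4\rfloor$, combined with the identity $\phi(G)+\gamma_M(G)=n/2+1$ of \cite{decyclegenus}, correctly gives $\phi(G)\ge\lceil(n+2)/4\rceil$ for every connected cubic graph, and your Xuong/Betti-deficiency reduction of the remaining inequality to ``a random cubic graph is a.a.s.\ upper-embeddable'' is sound --- though it buys nothing beyond what the paper already packages as Theorem \ref{thm:decycle2} (from \cite{decyclinggenus}). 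Note also that within this paper, a.a.s.\ upper-embeddability is a \emph{consequence} of Theorems \ref{thm:decycle1} and \ref{thm:decycle2} (that is exactly how Theorem \ref{thm:almostall} is proved), so you cannot borrow it from anywhere; it is the entire content of the theorem, and it is precisely where your proposal stops being a proof.

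The gap is the step you yourself flag as ``the heart of the argument.'' Growing $T$ by a breadth-first/greedy exploration and then ``repairing the few residual defects by local tree/cotree edge swaps'' is a plan, not an argument: nothing in the proposal shows that expansion controls the number of cotree components at all (the cotree has $n/2+1$ edges spread over $n$ vertices, and a spanning tree produced by BFS can a priori leave linearly many odd edge-components), and your parity bookkeeping rests on the premise that the obstructions are ``constantly many short cycles,'' which is not a correct structural picture --- odd cotree components need not sit on short cycles, so the Poisson count of short cycles does not localize the problem, and you give no mechanism by which a bounded swap removes an odd component without creating another. Your fallback, a second-moment computation for decycling sets of exactly size $\lceil(n+2)/4\rceil$, is equally unsubstantiated and unlikely to succeed as stated: the theorem pins $\phi$ to additive precision $O(1)$, a regime in which first- and second-moment calculations in the configuration model generically fail without substantial additional machinery (e.g.\ small-subgraph conditioning), and you supply neither the expectation asymptotics nor any variance bound. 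In \cite{decyclerandom} this a.a.s.\ upper bound is the substantive, nontrivial part of the result; as written, your proposal establishes only the easy deterministic half and defers the probabilistic core to hoped-for lemmas.
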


\begin{theorem}[\cite{decyclinggenus}]\label{thm:decycle2}

    A connected cubic graph $G$ is upper-embeddable if and only if \[\phi(G) = \left\lceil\dfrac{n + 2}{4}\right\rceil.\]

\end{theorem}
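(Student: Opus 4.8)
The plan is to reduce the claimed equivalence to the genus--decycling identity quoted earlier, namely $\phi(G) + \gamma_M(G) = n/2 + 1$ for every cubic graph (from \cite{decyclegenus}), and then to close the argument with elementary floor/ceiling arithmetic. The starting observation is that a connected cubic graph on $n$ vertices has $n$ even, $|V(G)| = n$, and $|E(G)| = 3n/2$, so its cycle rank is $|E(G)| - |V(G)| + 1 = n/2 + 1$. By the definition of upper-embeddability, $G$ is upper-embeddable precisely when $\gamma_M(G)$ attains its maximum possible value
\[
\left\lfloor \frac{|E(G)|-|V(G)|+1}{2}\right\rfloor = \left\lfloor \frac{n+2}{4}\right\rfloor.
\]

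The next step is to substitute the identity. Writing $\gamma_M(G) = n/2 + 1 - \phi(G)$, the upper-embeddability condition $\gamma_M(G) = \lfloor \frac{n+2}{4}\rfloor$ becomes
\[
\phi(G) = \frac{n}{2} + 1 - \left\lfloor \frac{n+2}{4}\right\rfloor.
\]
Thus the theorem follows once I verify the purely arithmetic identity $\frac{n}{2} + 1 - \lfloor \frac{n+2}{4}\rfloor = \lceil \frac{n+2}{4}\rceil$, equivalently $\lfloor \frac{n+2}{4}\rfloor + \lceil \frac{n+2}{4}\rceil = \frac{n}{2} + 1$, for even $n$.

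To handle this last equality I would split on the residue of $n$ modulo $4$. When $n \equiv 2 \pmod 4$ the quantity $\frac{n+2}{4}$ is an integer, so its floor and ceiling coincide and their sum is $2\cdot\frac{n+2}{4} = \frac{n+2}{2} = \frac{n}{2}+1$; when $n \equiv 0 \pmod 4$ the quantity $\frac{n+2}{4}$ is a half-integer whose floor and ceiling differ by $1$ and again sum to $\frac{n+2}{2} = \frac{n}{2}+1$. Since $n$ is necessarily even, these two cases are exhaustive, which establishes both directions of the equivalence simultaneously.

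My honest assessment is that there is no deep obstacle at this level: once the identity $\phi(G) + \gamma_M(G) = n/2+1$ is granted, the statement is a one-line substitution followed by a parity computation. The genuine mathematical content lives in that identity (and in the general maximum-genus bound $\gamma_M(G) \le \lfloor \frac{|E(G)|-|V(G)|+1}{2}\rfloor$ implicit in the definition of upper-embeddability), which is where the real work would have to be concentrated were it not already available. The only care required in the present argument is to record that $n$ must be even and to keep the floor/ceiling bookkeeping consistent across the two residue classes.
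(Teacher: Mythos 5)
Your proposal is correct. Note, however, that the paper does not prove this statement at all: it is imported verbatim from Long and Ren \cite{decyclinggenus} as a black-box citation, so there is no internal proof to compare against. What you have done is reconstruct the theorem from the other quoted ingredient, the Huang--Liu identity $\phi(G) + \gamma_M(G) = n/2 + 1$ from \cite{decyclegenus} (which the paper mentions in the same paragraph but never formally connects to Theorem \ref{thm:decycle2}), and your bookkeeping checks out: $n$ is even since $3n = 2|E(G)|$, the cycle rank is $n/2+1$, upper-embeddability is by definition the equality $\gamma_M(G) = \left\lfloor \frac{n+2}{4}\right\rfloor$, and the case split on $n \bmod 4$ correctly yields $\left\lfloor \frac{n+2}{4}\right\rfloor + \left\lceil \frac{n+2}{4}\right\rceil = \frac{n}{2}+1$, so substitution gives both directions at once because the identity is an equality rather than an inequality. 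This is almost certainly the intended logical relationship between the two cited results (one can sanity-check it on $K_4$: $\phi = 2$, $\gamma_M = 1$, $n/2+1 = 3$). The one caveat worth recording is that your argument is only as strong as the quoted identity: maximum genus is defined via cellular embeddings of connected graphs, so you should state that you are invoking the identity for connected cubic graphs, which is exactly the hypothesis of the theorem; with that noted, your reduction is complete and self-contained modulo \cite{decyclegenus}.
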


\begin{theorem}\label{thm:almostall} 
A connected cubic graph $G$ a.a.s.\ satisfies $\Z(G) \le \alpha(G) + 2.$

\end{theorem}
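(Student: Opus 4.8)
The plan is to chain together the structural dichotomy for upper-embeddable cubic graphs with the asymptotic result on the decycling number of random cubic graphs. First I would invoke Theorem \ref{thm:decycle1}, which guarantees that a random cubic graph $G$ on $n$ vertices a.a.s.\ satisfies $\phi(G) = \lceil (n+2)/4 \rceil$. By Theorem \ref{thm:decycle2}, this exact value of the decycling number is equivalent to $G$ being upper-embeddable, so combining the two results gives that a random cubic graph is a.a.s.\ upper-embeddable.

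Next I would use the fact, recorded in the discussion preceding Theorems \ref{thm:decycle4.2} and \ref{thm:decycle4.3}, that every upper-embeddable graph is either orientably one-face-embeddable or orientably two-face-embeddable. In either case Corollary \ref{cor:onefacetwoface} applies: the one-face case yields the stronger bound $\Z(G) \le \alpha(G) + 1$, and the two-face case yields $\Z(G) \le \alpha(G) + 2$. Hence whenever $G$ is upper-embeddable we have $\Z(G) \le \alpha(G) + 2$, and since upper-embeddability holds a.a.s., the desired bound holds a.a.s.

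The one point that needs genuine care is reconciling the random cubic graph model used in Theorem \ref{thm:decycle1} with the connectivity hypothesis appearing both in the statement and in Corollary \ref{cor:onefacetwoface}. Here I would appeal to the standard fact that a random cubic graph is a.a.s.\ connected; since a finite intersection of a.a.s.\ events is again a.a.s., we may assume $G$ is simultaneously connected and upper-embeddable, which is precisely the setting Corollary \ref{cor:onefacetwoface} requires. I expect this reconciliation of the several a.a.s.\ statements to be the only real subtlety: the remainder of the argument is a direct concatenation of the cited results, and no new probabilistic estimates are needed.
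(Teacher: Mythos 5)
Your proposal is correct and follows essentially the same route as the paper, which likewise chains Theorems \ref{thm:decycle1} and \ref{thm:decycle2} to conclude that a connected cubic graph is a.a.s.\ upper-embeddable and then applies Corollary \ref{cor:onefacetwoface}. Your explicit reconciliation of the random-graph model with the connectivity hypothesis (via the standard fact that random cubic graphs are a.a.s.\ connected) is a detail the paper leaves implicit, but it does not change the argument.
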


\begin{proof}  Theorem \ref{thm:decycle1} and \ref{thm:decycle2}  imply that a connected cubic graph $G$ a.a.s.\ is upper-emdebbable, which means, by Corollary \ref{cor:onefacetwoface}, $G$ also a.a.s.\ satisfies $\Z(G) \le \alpha(G) + 2$.\end{proof}

\subsection*{Infinite Family where $\Z(G) = \alpha(G) + 1$}

An infinite family of cubic graphs with $\Z(G) = \alpha(G) + 1$ will be established to show that Conjecture \ref{theconjecture} would provide a tight bound.
A {\it $3$-$1$ tree} is a tree in which every vertex has a degree of either $1$ or $3$.  In a tree a degree $1$ vertex is called a {\it leaf}.

\begin{lemma}\label{lem:noleafchain}
   
    If $T$ is a $3$-$1$ tree and $5\le |V(T)|$, then there exists a minimum zero forcing set of $T$ that has a set of forces such that every leaf of the zero forcing set performs a force.
\end{lemma}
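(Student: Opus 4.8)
The plan is to translate the statement into the language of path covers, build a convenient minimum path cover, and then realize it as a set of forces in which the leaves act as the starts of their chains.

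First I would record the key reformulation. By Theorem \ref{thm:Z(T)=P(T)}, $\Z(T) = P(T)$, and since $T$ is a tree every forcing chain is an induced path; thus the chains of any minimum set of forces form a minimum path cover, with one chain per vertex of the zero forcing set $B$ (the chain starts). A vertex $\ell \in B$ that is a leaf of $T$ has a unique neighbor, so it performs a force exactly when its chain contains at least two vertices. Hence it suffices to produce a minimum path cover of $T$, realizable as a set of forces, in which no path consists of a single leaf: in any such realization $B$ is the set of chain starts, and every leaf of $T$ lying in $B$ then starts a chain on at least two vertices and so performs a force.

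Next I would construct such a cover. Minimizing the number of paths in a path cover of a tree is the same as choosing a minimum set $D \subseteq E(T)$ with $\Delta(T-D) \le 2$, since the components of $T-D$ are then paths and their number is $|D|+1$. A path is a single leaf precisely when the corresponding leaf-edge lies in $D$, so I want a minimum such $D$ containing no leaf-edge. I would obtain this by an exchange argument: if a leaf-edge $uv \in D$ with $v$ the leaf, then minimality forces $uv$ to be the only edge of $D$ incident to $u$ (otherwise $D \setminus \{uv\}$ is smaller and still has $\Delta \le 2$). Because $|V(T)| \ge 6$ (the vertex count of a $3$-$1$ tree is even, so $\ge 5$ forces $\ge 6$) the graph is not $K_{1,3}$, and the degree-$3$ vertex $u$ therefore has an internal neighbor $w$; replacing $uv$ by $uw$ in $D$ preserves both $|D|$ and the bound $\Delta(T-D)\le 2$ while deleting one fewer leaf-edge. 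Iterating yields a minimum path cover $\mathcal P^{*}$ in which every leaf keeps its incident edge and so is the endpoint of a path on at least two vertices.

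Finally I would realize $\mathcal P^{*}$ as a set of forces. The natural schedule is to fire every leaf-start first (each such force is unconditional, a leaf having no neighbor along a deleted edge), turning the adjacent internal vertices blue, and then to propagate along the remaining paths. I expect the main obstacle to be the ordering of the forces internal to $\mathcal P^{*}$: as the analysis of small trees shows, an internal force performed too early can capture a vertex intended for a leaf's chain and strand that leaf as a singleton, so realizability is genuinely order-sensitive. I would resolve this by observing that the deleted set $D$, being a subgraph of the tree $T$, is a forest, so the constraints ``an off-path neighbor must already be blue'' among the internal forces are acyclic and admit a valid linear order; equivalently one may invoke the realizability of minimum path covers of trees underlying Theorem \ref{thm:Z(T)=P(T)}. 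The resulting $B$ is a minimum zero forcing set, and by construction every leaf of $T$ in $B$ starts a chain on at least two vertices and hence performs a force.
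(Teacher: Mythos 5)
Your reduction to path covers and your exchange construction are correct: in a tree the forcing chains of a set of forces are exactly a path partition, a leaf of $B$ forces if and only if its chain is non-singleton, $P(T)-1$ equals the minimum size of a set $D\subseteq E(T)$ with $\Delta(T-D)\le 2$, and your swap (using that a leaf-edge in a minimum $D$ must be the only $D$-edge at the support vertex $u$, and that $|V(T)|\ge 5$ forces $|V(T)|\ge 6$ by parity, so $T\neq K_{1,3}$ and $u$ has a non-leaf neighbor) produces a minimum cover in which no singleton path is a leaf. This is genuinely different from the paper's proof, which works directly with sets of forces: it takes a minimum zero forcing set whose set of forces has the fewest non-forcing leaves and repairs a bad leaf $\ell$ locally, either rerouting so that $\ell$ rather than $v_1$ forces the support vertex $v$ (when $\deg(v_1)=3$), or exchanging $\ell$ for $v_2$ in the set (when $v_1$ is a leaf). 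The paper's route needs no statement about which path covers are realizable; yours does, and that is where the gap lies.

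The realizability step is not justified as written. Theorem \ref{thm:Z(T)=P(T)} asserts only that the two parameters are equal, i.e.\ that \emph{some} minimum zero forcing set of size $P(T)$ exists; it does not say that an arbitrary prescribed minimum path cover of a tree arises as the chain decomposition of a set of forces, so there is nothing there to invoke. Your offered reason --- ``$D$, being a subgraph of the tree $T$, is a forest, so the constraints are acyclic'' --- is vacuous: every subset of $E(T)$ is a forest, and acyclicity of $D$ is not what makes the precedence constraints schedulable. The claim you need is true, but it requires an argument that actually uses the acyclicity of $T$: no deleted edge joins two vertices of the same cover path and any two paths are joined by at most one deleted edge (either would create a cycle in $T$), so contracting the paths of $\mathcal P^{*}$ yields a tree; one then shows, by induction on this contracted tree, that the digraph of precedence constraints (the force performed by $x$ must wait for the forces that color the $D$-neighbors of $x$) has no directed cycle, since such a cycle would have to cross some contracted edge in both directions, forcing a directed path from the force performed by a vertex back to the strictly earlier force that colors that vertex. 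That the tree hypothesis does real work here is visible already in $C_4$: the minimum path cover by two opposite edges, with chain starts at two antipodal vertices, deadlocks, so ``minimum path cover plus an orientation'' is not automatically realizable in general graphs. Once this realizability lemma is proved (or the paper's direct exchange on forces is substituted for it), your argument goes through.
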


\begin{proof}Let $B$ be a minimum zero forcing set of $T$ with a set of forces that has the fewest number of leaves that do not perform a force.  
Assume there is a leaf $\ell \in B$ that does not perform a force.
    
    Let $v\in V(T)$ be adjacent to $\ell$ which implies $\deg(v) = 3$ since $T$ is not $K_2$. 
    Let $N(v) = \{v_1,v_2,\ell\}$.
    Since $\ell\in B$, it follows that $v\notin B$. 
    Without loss of generality, assume that $v_1$ forces $v$.  
    This means that $v$ forces $v_2$, otherwise $v$ could force $\ell$ and $B\backslash \{\ell\}$ would be a smaller zero forcing set.

    If $\deg(v_1) = 3$, then change the forcing process so that $\ell$ forces $v$ rather than $v_1$.
    All other forces will remain the same in coloring the entire graph blue.
    Therefore, the number of leaves that do not perform a force has been reduced, a contradiction.

    If $\deg(v_1) = 1$, then $(B\backslash\{\ell\})\cup \{v_2\}$ is a zero forcing set where $v_1$ forces $v$ and $v$ forces $\ell$. $T$ is not $K_{1,3}$, so $v_2$ is not a leaf. Therefore, the number of leaves that do not perform a force has been reduced, a contradiction.\end{proof}

Let $G$ be a graph, $v \in V(G)$, and $H$ be a graph with $deg_G(v)$ vertices of degree $2$ and $|V(H)| - deg_G(v)$ vertices of degree $3$.  The graph $H$ {\it replaces} $v$ in $G$ by deleting $v$ and adding a matching between the neighbors of $v$ and the degree $2$ vertices of $H$. 
 In this paper vertices are replaced by graphs where the resulting graph is unique, even though this is not necessarily the case for all replacements. 
Define $G_T$ to be the graph obtained from tree $T$ by replacing every leaf of $T$ with $K_4$ with a subdivided edge.

 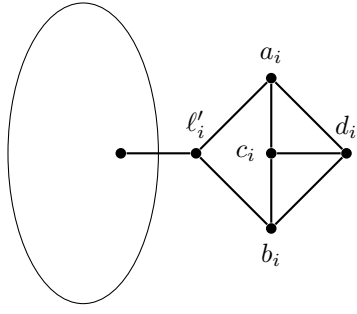
\begin{figure}[h!]
    \centering
    \captionsetup{width=0.8\textwidth}

    \begin{tikzpicture}

        \draw (.25,0) ellipse (1cm and 2cm);

        \node[circle,fill=black,inner sep=0pt,minimum size=4pt,label=above:{$\ell_i'$}] (ell_i2) at (1.75,0) {};
        \node[circle,fill=black,inner sep=0pt,minimum size=4pt,label=above:{}] (v2) at (0.75,0) {};

        \node[circle,fill=black,inner sep=0pt,minimum size=4pt,label=below:{$b_i$}] (b_i2) at (2.75,-1) {};
        \node[circle,fill=black,inner sep=0pt,minimum size=4pt,label=above:{$d_i$}] (d_i2) at (3.75,0) {};

        \node[circle,fill=black,inner sep=0pt,minimum size=4pt,label=above:{$a_i$}] (a_i2) at (2.75,1) {};

        \node[circle,fill=black,inner sep=0pt,minimum size=4pt,label= left:{$c_i$}] (c_i2) at (2.75,0) {};

        \Edge(v2)(ell_i2)
	\Edge(ell_i2)(b_i2)
	\Edge(b_i2)(d_i2)
        \Edge(a_i2)(d_i2)
        \Edge(c_i2)(d_i2)
        \Edge(a_i2)(ell_i2)
        \Edge(c_i2)(a_i2)
        \Edge(c_i2)(b_i2)

	 \end{tikzpicture}
    \caption{A leaf of $T$ replaced by a $K_4$ with a subdivided edge.}\label{fig:leaftodiamond}
\end{figure}

\begin{theorem}\label{thm:3-1tree}
    If $T$ is a $3$-$1$ tree on $n$ vertices, then 
    

    \[\Z(G_T) = \Z(T) + n+2 = \alpha(G_T) + 1.\]
\end{theorem}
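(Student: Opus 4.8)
The plan is to prove the two equalities separately: first $\Z(G_T)=\Z(T)+n+2$, and then $\alpha(G_T)=\Z(T)+n+1$, so that $\Z(G_T)=\alpha(G_T)+1$ drops out. Throughout I write each replaced leaf's gadget as the five vertices $\{\ell_i',a_i,b_i,c_i,d_i\}$ of Figure~\ref{fig:leaftodiamond}, where $\ell_i'$ is the subdivision vertex joined to the rest of $G_T$. The starting observation is that deleting the four ``diamond'' vertices $a_i,b_i,c_i,d_i$ from every gadget returns a subgraph $\hat T\cong T$ in which each $\ell_i'$ plays the role of the leaf it replaced; thus $G_T$ is $\hat T$ with a pendant diamond hung on each leaf. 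I also record the leaf count: since $T$ is $3$-$1$ it has $L=(n+2)/2$ leaves and $I=(n-2)/2$ internal vertices, so $n+2=2L$.

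For $\Z(G_T)$ the key is a local analysis of one gadget. A short check shows $\{a_i,b_i\}$ is a fort of $G_T$, so every zero forcing set meets each gadget; a little more work shows that one blue gadget vertex never suffices even when $\ell_i'$ is forced from outside, so each gadget carries at least two initial blue vertices, contributing $2L=n+2$. The finer dichotomy I will exploit is this: a gadget with exactly two blue vertices (necessarily one of $\{a_i,c_i\},\{a_i,d_i\},\{b_i,c_i\},\{b_i,d_i\}$) cannot start on its own and only completes after $\ell_i'$ is forced inward (a ``sink''), whereas forcing $\ell_i'$ outward into the tree costs three blue vertices (a ``source''). For the upper bound I would take a minimum zero forcing set $B_T$ of $T$ arranged by Lemma~\ref{lem:noleafchain} so that every blue leaf forces, install a $3$-blue source-diamond at each blue leaf and a $2$-blue sink-diamond at each white leaf, and keep the internal vertices of $B_T$; this colours each source's $\ell_i'$, after which the process runs exactly as in $T$ and every sink-diamond closes up. The count is $3p+2(L-p)+q=2L+(p+q)=2L+\Z(T)$, where $p,q$ count the blue leaves and internal vertices of $B_T$ and $p+q=|B_T|=\Z(T)$, giving $\Z(G_T)\le\Z(T)+n+2$. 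For the matching lower bound I would take any zero forcing set $B$ of $G_T$, set $e_i=|B\cap\text{gadget}_i|-2\ge 0$, and push the $G_T$-forcing down to $\hat T\cong T$: every force between two $\hat T$-vertices is a legitimate $T$-force, the only $\hat T$-vertices not forced from inside $\hat T$ are the $\ell_i'$ forced outward by their diamonds (which forces $e_i\ge 1$), so $(B\cap\{\text{internal vertices}\})\cup\{\ell_i : e_i\ge 1\}$ is a zero forcing set of $T$. Hence $\Z(T)\le |B\cap\{\text{internal}\}|+\sum_i e_i$, which rearranges to $|B|\ge \Z(T)+2L$.

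For $\alpha(G_T)$ I would use that each gadget contributes at most $2$ to any independent set (its maximum independent set is $\{a_i,b_i\}$, of size $2$), while the internal vertices contribute at most $\alpha(T')$, where $T'$ is the subtree of $G_T$ induced by the degree-$3$ vertices of $T$. These bounds are achieved simultaneously by taking $\{a_i,b_i\}$ in every gadget (which uses no $\ell_i'$ and so places no constraint on $T'$) together with a maximum independent set of $T'$, giving $\alpha(G_T)=2L+\alpha(T')=(n+2)+\alpha(T')$.

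It then remains to prove the purely tree-theoretic identity $\alpha(T')=\Z(T)-1$. By Theorem~\ref{thm:Z(T)=P(T)}, $\Z(T)=P(T)$, and a minimum path cover corresponds to a maximum spanning linear forest, so $P(T)=|V(T)|-m_2(T)$, where $m_2(T)$ is the largest number of edges meeting each vertex at most twice. The complementary unused edges must meet every internal vertex at least once, i.e.\ they form a minimum edge cover of the internal vertices; pairing internal vertices along a maximum matching of $T'$ and spending one further edge on each leftover internal vertex shows this minimum equals $I-\nu(T')$, where $\nu(T')$ is the matching number of $T'$. Therefore $P(T)=|V(T)|-\big((|V(T)|-1)-(I-\nu(T'))\big)=1+I-\nu(T')$. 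Since $T'$ is a tree, hence bipartite, K\"onig's theorem gives $\alpha(T')=I-\nu(T')$, so $\alpha(T')=P(T)-1=\Z(T)-1$. Combining the three parts, $\alpha(G_T)=(n+2)+\Z(T)-1=\Z(T)+n+1=\Z(G_T)-1$, as required. I expect the main obstacle to be the lower bound for $\Z(G_T)$: making the projection of a $G_T$-forcing to a valid chronological $T$-forcing fully rigorous, and verifying that every ``source'' $\ell_i'$ is correctly charged to a gadget excess, rests on the careful case analysis of how two- versus three-blue diamonds behave.
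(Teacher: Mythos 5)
Your proposal is correct, and while the gadget-level bookkeeping coincides with the paper's (the same disjoint forts $\{a_i,b_i\}$ and $\{c_i,d_i\}$ forcing two blue vertices per diamond, the same $3$-blue/$2$-blue construction giving $\Z(G_T)\le\Z(T)+2L$, and the same two-sided count $\alpha(G_T)=2L+\alpha(T_s)$), you diverge from the paper in two substantive places. First, for the lower bound $\Z(G_T)\ge\Z(T)+2L$ the paper argues by contradiction with an exchange argument on a minimum zero forcing set (cases $|B'\cap\{a_i,b_i,c_i,d_i\}|=4,3,2$, trimming down to the all-$2$ case); your excess-counting projection, charging each ``source'' $\ell_i'$ to $e_i\ge 1$ and replaying the restricted process as a $T$-forcing, is a direct count that handles mixed gadget types uniformly and avoids the iterated swaps — it is rigorous once you add the standard monotonicity observation that vertices blue \emph{earlier} in the simulated process never invalidate a force (either the same force applies or its target is already blue). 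Second, and more genuinely different, is your proof of the tree identity: the paper establishes $\overline{\beta}(T_s)=\Z(T)-1$ via Lemma \ref{lem:noleafchain} and an auxiliary tree on forcing chains (every leaf of the forcing set must force, so every chain meets $T_s$ and the chain-adjacency edges give an edge cover), then applies Theorem \ref{thm:konig}; you instead compute $P(T)=n-m_2(T)$, identify the complement of a maximum linear forest with a minimum edge set covering the internal vertices, evaluate that minimum as $I-\nu(T_s)$ by the standard Gallai-type assignment argument, and finish with K\"onig in the form $\alpha(T_s)=I-\nu(T_s)$. Your route buys two things: it dispenses with Lemma \ref{lem:noleafchain} entirely (note that even in your upper bound the lemma is unnecessary, since a $3$-blue source diamond forces its own $\ell_i'$ via $d_i\to b_i\to\ell_i'$ regardless of whether the leaf forces in $T$), and it treats $T=K_{1,3}$ uniformly, whereas the paper must special-case it because the lemma requires $|V(T)|\ge 5$ and $T_s$ degenerates to an isolated vertex where the edge-cover form of K\"onig does not apply. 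What the paper's approach buys in exchange is the structural statement $\overline{\beta}(T_s)=\Z(T)-1$ extracted directly from the forcing chains themselves. One point worth making explicit in a write-up: a gadget with exactly two blue vertices must meet \emph{both} forts, which is precisely why the pair $\{c_i,d_i\}$ alone (or $\{a_i,b_i\}$ alone) is excluded and why your four listed sink pairs are exhaustive — with blue set $\{c_i,d_i,\ell_i'\}$ the fort $\{a_i,b_i\}$ would stay white forever.
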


\begin{proof}
Assume $T$ is a $3$-$1$ tree with $L$ leaves.
Let $K_4$ with a subdivided edge replace leaf $\ell_i$ of $T$, for $1 \le i \le L$, and be labelled by vertices $a_i,b_i,c_i,d_i$, and $\ell_i'$ in $G_T$ as in Figure \ref{fig:leaftodiamond}.
If $T= K_{1,3}$, then $\Z(G_T) = 8$ and $\alpha(G_T)=7$. This satisfies the statement, so assume that $T$ is not $K_{1,3}$. Let $T_{s}$ to be the subgraph of $T$ induced by the degree 3 vertices of $T$.   
By Lemma \ref{lem:noleafchain}, there exists a minimum zero forcing set of $T$, $B$, with a set of forces such that every leaf in $B$ performs a force. Let $V_H$ be the set of forcing chains that correspond to this set of forces.
Create an auxiliary graph $H$ with vertex set $V_H$ and two vertices adjacent in $H$ if there is an edge between the two forcing chains. 
The graph $H$ is a tree with $Z(T)$ vertices and since each forcing chain contains a vertex of degree $3$, every edge of $H$ is also an edge in $T_S$.
Therefore, $E(H)$ corresponds to an edge cover of $T_s$ which implies $\overline{\beta}(T_s) \le |E(H)| = \Z(T_s) - 1$.

Let $P$ be a minimum edge cover of $T_s$ and consider the graph $P' =T - P$.
There are $\overline{\beta}(T_s) + 1$ components of $P'$ and each is a path.
Since the path cover number and the zero forcing number are the same for trees, by Theorem \ref{thm:Z(T)=P(T)}, it follows that $\Z(T) \le \overline{\beta}(T_s) + 1$ and thus $\overline{\beta}(T_s)=\Z(T)-1$.
Since $T$ is not $K_{1,3}$,  $T_s$ is a non-trivial tree so Theorem \ref{thm:konig} implies that $\alpha(T_s) = \Z(T) - 1$.

Let $A$ be a maximum independent set of $T_s$ and $A' = A\cup \{a_i,b_i \, | \, 1\le i \le L\}$. 
$A'$ is independent in $G_T$, so $\alpha(T_s) + 2L \le \alpha(G_T)$. Therefore, $(\Z(T) - 1) + 2L \le \alpha(G_T).$

Conversely, assume that $A'$ is a maximum independent set of $G_T$ such that $\alpha(T_s) + 2L < |A'|$.
$A'$ has at most two vertices from $\{a_i,b_i,c_i,d_i,\ell_i'\}$.
This implies $A'$ has more than $\alpha(T_s)$ vertices from $T_s$, a contradiction.
Thus, $\alpha(G_T) \le \alpha(T_s) + 2 L$ which means

\begin{equation}\label{3-1eq3}
\Z(T) + 2L = \alpha(G_T) +1.
\end{equation}

Recall $B$ is a minimum zero forcing set of $T$ and let $B' = (B\backslash\{\ell_i\, |\, 1\le i \le L\})\cup \{ a_i,c_i\, | \, 1\le i \le L \} \cup \{d_i \, | \,  \ell_i \in B\}$.
Since $B'$ is a zero forcing set for $G_T$, $\Z(G_T) \le \Z(T) + 2L$.

Assume that $\Z(G_T)<\Z(T)+2L$ and let $B'$ be a minimum zero forcing set of $G_T$. 
Since $\{a_i,b_i\}$ and $\{c_i,d_i\}$ are disjoint forts in $G_T$, $2 \le |B' \cap \{a_i,b_i,c_i,d_i\}|$, which implies that $|B' \cap V(T)| < \Z(T)$. 
Without loss of generality, let $a_i,c_i\in B'$ for $1\le i \le L$. 

If $|B' \cap \{a_i,b_i,c_i,d_i\}| = 4$ for some $i$, then $B'\backslash\{b_i\}$ is a zero forcing set for $G_T$, which contradicts that $B'$ is a minimum zero forcing set for $G_T$.
If $|B' \cap \{a_i,b_i,c_i,d_i\}| = 3$ for any $i$, then a new zero forcing set for $G_T$ can be constructed by removing $b_i$ (or $d_i$) from $B'$ and replacing it with $\ell_i'$.
If $|B' \cap \{a_i,b_i,c_i,d_i\}| = 2$ for all $1\le i \le L$, then for each $i$, $b_i$ and $d_i$ cannot be forced until $\ell_i'$ is blue.  
Therefore, $B'\cap V(T)$ must be a zero forcing set of $T$, which contradicts $|B' \cap V(T)| < \Z(T)$.  
Thus,

\begin{equation}\label{3-1eq2}\Z(G_T) = Z(T) + 2 L.\end{equation}

Since $T$ is a $3$-$1$ tree $L = \frac{n+2}{2}$ and equations \ref{3-1eq3} and \ref{3-1eq2} give the desired result.\end{proof}

Within the proof of Theorem \ref{thm:3-1tree} it is also shown that $\Z(G_T) = \Z(T) + 2L = \alpha(G_T) + 1$ where $L$ is the number of leaves in $3$-$1$ tree $T$.


\section{Bounds on Subcubic Graphs}\label{sec:subcubic}

This section uses previously established bounds on cubic graphs to get bounds on subcubic graphs. 
This is accomplished by starting with a subcubic graph and using replacements to change it into a cubic graph while controlling the zero forcing number and independence number.  

Let $H_1$ be the graph obtained by replacing a degree 2 vertex of $K_4-e$ with $K_4-e$.
Define $G_1(v)$ to be the graph obtained from $G$ by replacing a degree $1$ vertex $v$ with $H_1$ (see Figure \ref{fig:DegreeOneBlowup}).  
The notation for $G_1(v)$ will be shortened to $G_1$ when it is clear which vertex is being replaced.
This replacement will be used to alter the degree 1 vertices of a graph while controlling the zero forcing number and independence number.

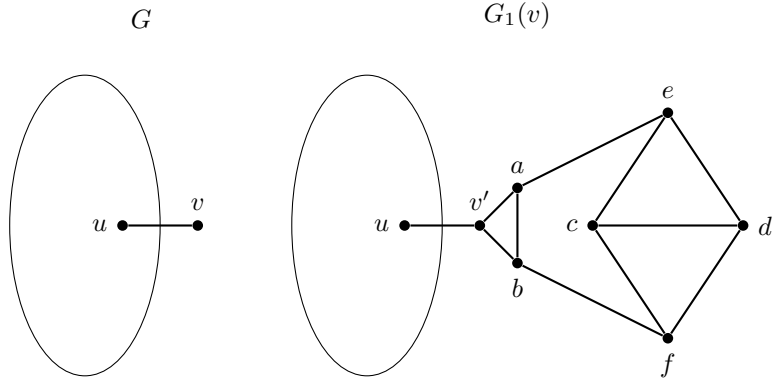
\begin{figure}[h!]
    \centering
    \captionsetup{width=0.8\textwidth}
    \begin{tikzpicture}

        Graph of G
        \draw (-3.5,0) ellipse (1cm and 2cm) (-2.75,2.5)  node [text=black,above] {$G$};
        \node[circle,fill=black,inner sep=0pt,minimum size=4pt,label=above:{$v$}] (y) at (-2,0) {};
        \node[circle,fill=black,inner sep=0pt,minimum size=4pt,label=left:{$u$}] (x) at (-3,0) {};

        \draw (.25,0) ellipse (1cm and 2cm) (2.25,2.5)  node [text=black,above] { $G_1(v)$};
        \node[circle,fill=black,inner sep=0pt,minimum size=4pt,label=above:{$v'$}] (y1) at (1.75,0) {};
        \node[circle,fill=black,inner sep=0pt,minimum size=4pt,label=left:{$u$}] (x1) at (0.75,0) {};

        \node[circle,fill=black,inner sep=0pt,minimum size=4pt,label=above:{$a$}] (a) at (2.25,0.5) {};

        \node[circle,fill=black,inner sep=0pt,minimum size=4pt,label=below:{$b$}] (b) at (2.25,-0.5) {};

        \node[circle,fill=black,inner sep=0pt,minimum size=4pt,label=below:{$f$}] (f) at (4.25,-1.5) {};

        \node[circle,fill=black,inner sep=0pt,minimum size=4pt,label=above:{$e$}] (e) at (4.25,1.5) {};

        \node[circle,fill=black,inner sep=0pt,minimum size=4pt,label=left:{$c$}] (c) at (3.25,0) {};

        \node[circle,fill=black,inner sep=0pt,minimum size=4pt,label=right:{$d$}] (d) at (5.25,0) {};

	\Edge(x)(y)
        \Edge(x1)(y1)
        \Edge(y1)(a)
        \Edge(y1)(b)
        \Edge(a)(b)
        \Edge(a)(e)
        \Edge(b)(f)
        \Edge(c)(e)
        \Edge(c)(d)
        \Edge(f)(d)
        \Edge(c)(f)
        \Edge(e)(d)

	 \end{tikzpicture}
    \caption{Graphs $G$ and $G_1(v)$.}\label{fig:DegreeOneBlowup}
\end{figure}

\begin{lemma}\label{lem:subcubicleaves} If $G$ is a graph, $v\in V(G)$, and $\deg(v) = 1$, then $\alpha(G_1(v)) = \alpha(G) + 2$ and $\Z(G_1(v)) = \Z(G) + 2$.
\end{lemma}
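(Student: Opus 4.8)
The plan is to analyze the $7$-vertex gadget $H_1$ from Figure~\ref{fig:DegreeOneBlowup} and prove each equality by pinning down matching upper and lower bounds. Writing $N(v)=\{u\}$ and labelling $H_1$ by $v',a,b,c,d,e,f$ as in the figure, the first step is to record three facts about $H_1$ that drive everything: (i) $\{c,d\}$ and $\{a,b,e,f\}$ are \emph{disjoint} forts of $G_1(v)$ contained in $H_1$, since no outside vertex has exactly one neighbor in either set; (ii) $\alpha(H_1)=3$, realized by the independent set $\{v',e,f\}$, whose only edge to $G-v$ is $v'u$; and (iii) $\Z(H_1)=3$. For (iii) I would get $\Z(H_1)\ge 3$ from the two disjoint forts together with a short check that no $2$-set meeting both forts propagates (using the automorphisms $c\leftrightarrow d$ and $(a\,b)(e\,f)$ there are only two such sets to test), and $\Z(H_1)\le 3$ from the explicit set $\{v',a,c\}$. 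I may also assume $G$ is connected, since $G_1(v)$ only alters the component of $v$ and both parameters are additive over components.

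For $\alpha(G_1)=\alpha(G)+2$, the lower bound is an extension argument: choose a maximum independent set $A$ of $G$ with $v\in A$ (possible since $v$ is a leaf), so $u\notin A$; then $(A\setminus\{v\})\cup\{v',e,f\}$ is independent in $G_1$ because $v'$'s only external neighbor $u$ is absent, giving $\alpha(G_1)\ge\alpha(G)+2$. For the upper bound, take a maximum independent set $B'$ of $G_1$ and split it as $B_0=B'\cap(V(G)\setminus\{v\})$ and $B_1=B'\cap V(H_1)$. If $v'\notin B'$ then $B_1$ avoids $v'$, so $|B_1|\le 2$ since $\alpha(H_1-v')=2$, while $B_0$ is independent in $G$; if $v'\in B'$ then $u\notin B'$, so $B_0\cup\{v\}$ is independent in $G$, giving $|B_0|\le\alpha(G)-1$ while $|B_1|\le\alpha(H_1)=3$. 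Either way $|B'|\le\alpha(G)+2$.

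For $\Z(G_1)\le\Z(G)+2$, start from a minimum zero forcing set $B$ of $G$ with $v\notin B$ (available since $G$ is connected and nontrivial) and set $B'=B\cup\{a,c\}$. Running the forces of $B$ inside $G-v$ is unaffected by the swap of $v$ for $v'$ at $u$, so all of $G-v$ is colored and, at the moment $u$ would have forced $v$, it instead forces $v'$; then $v'\to b$, $a\to e$, $b\to f$, and $c\to d$ finish $H_1$. Thus $B'$ is a zero forcing set of size $\Z(G)+2$.

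The crux is the reverse inequality $\Z(G_1)\ge\Z(G)+2$, which I would obtain by exploiting that $uv'$ is a bridge separating $G-v$ from $H_1$. Given a minimum zero forcing set $B'$ of $G_1$ with $B_0,B_1$ as above, I split into three cases according to how the bridge is used in a chronological list of forces: $u\to v'$, $v'\to u$, or never. When no color crosses into $G-v$ (the first and third cases), $B_0$ alone colors $G-v$ and hence, via $u\to v$, all of $G$, while the two disjoint forts force $|B_1|\ge 2$, giving $\Z(G)\le|B'|-2$. When $v'\to u$ is used, $H_1$ must be colored from $B_1$ alone so $|B_1|\ge\Z(H_1)=3$, while $B_0\cup\{u\}$ colors $G-v$ and then $G$, giving $\Z(G)\le|B_0|+1\le|B'|-2$. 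The delicate point, and the step I expect to take the most care, is justifying that the $G-v$-internal forces of $B'$ remain valid after deleting $v'$ from $u$'s neighborhood, i.e.\ that whenever $u$ forces inside $G-v$ the vertex $v'$ is already blue, so that the restricted process is genuinely a valid forcing process in $G$.
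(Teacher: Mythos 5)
Most of your proposal matches the paper's proof (the fort pair $\{c,d\}$, $\{a,b,e,f\}$, the independence counts via $\alpha(H_1-v')=2$, and the upper bound $B\cup\{a,c\}$ are all essentially the paper's steps), but your lower-bound argument for $\Z(G_1(v))\ge \Z(G)+2$ has a genuine gap in the ``bridge never used'' case, and your flagged delicate point misdiagnoses it. You claim that when no force crosses the bridge, ``$B_0$ alone colors $G-v$ and hence, via $u\to v$, all of $G$.'' The first half is fine (deleting $v'$ from $u$'s neighborhood only weakens $u$'s forcing condition), but the lift to $G$ fails: in $G$ the vertex $v$ is a second white neighbor of $u$, and if $u$ performs an internal force in the $G_1$-process (possible, since in this case the gadget can self-color $v'$ from $B_1$, e.g.\ $B_1=\{a,c,e\}$ forces $e\to d$, $c\to f$, $f\to b$, $b\to v'$), then in $G$ that force of $u$ is blocked, and $v$ can never be colored first because only $u$ can force $v$ and each vertex forces at most once. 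Concretely, take $G=K_{1,3}$ with center $u$ and leaves $v,v_2,w$: then $B'=\{v_2,a,c,e\}$ is a minimum zero forcing set of $G_1(v)$ with chronological list $v_2\to u$, $e\to d$, $c\to f$, $f\to b$, $b\to v'$, $u\to w$, which never uses the bridge, yet $B_0=\{v_2\}$ is not a zero forcing set of $G$ (indeed $\Z(K_{1,3})=2$). Your criterion ``whenever $u$ forces inside $G-v$ the vertex $v'$ is already blue'' is automatically true in $G_1$ and so resolves nothing; the real obstruction is that $v$, unlike $v'$, is not blue in $G$. A similar defect afflicts your $v'\to u$ case: the specific set $B_0\cup\{u\}$ need not force $G$ (for $G=K_{1,3}$ with leaves $v,w,x$ and $B'=\{x,a,c,e\}$, the set $\{x,u\}$ stalls immediately because $u$ has white neighbors $v$ and $w$), although there your count $|B_1|\ge\Z(H_1)=3$ is correct and $B_0\cup\{v\}$ salvages the bound.

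The repair is exactly the device the paper uses: when the gadget carries three or more initially blue vertices, put $v$ itself (not $u$) back into the set, i.e.\ use $(B'\cap V(G))\cup\{v\}$, which mirrors both $v'\to u$ (as $v\to u$) and any internal force of $u$; you would additionally need to show $|B_1|\ge 3$ in your ``never'' case, which does hold (without the force $u\to v'$, any $2$-set meeting both forts leaves the gadget permanently stalled, contradicting that $B'$ forces $G_1$), but your proposal asserts only $|B_1|\ge 2$ there, which is not enough to pay for the extra vertex. The paper sidesteps your case split on bridge usage entirely: it cases on $|B'\cap\{a,b,c,d,e,f\}|$, using its list of six forts (not just the two disjoint ones) to force the two-vertex intersection to be, up to symmetry, $\{a,c\}$ --- a configuration in which the gadget is provably inert until $u\to v'$ occurs, so $u$'s unique force is the crossing one and $B'\cap V(G)$ restricts cleanly --- and when the intersection has size at least $3$ it adds $v$ back. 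Your two-fort weakening is precisely what lets the problematic configurations (a self-coloring gadget with $u$ forcing internally) slip through.
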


\begin{proof}

 Let $H_1$ be the graph that replaces $v$ in $G$, and be labelled by vertices $a,b,c,d,e,f$, and $v'$ in $G_1(v)$ as in Figure \ref{fig:DegreeOneBlowup}.
Let $A$ be a maximum independent set of $G$. 
Since $A \cup \{e,f\}$ is independent in $G_1$, it follows that $\alpha(G) + 2 \le \alpha(G_1)$. 
Also, $\alpha(H_1-v') = 2$, so $\alpha(G_1) \leq \alpha(G)+2$. 
Therefore, $\alpha(G_1) =\alpha(G)+2$.

Let $B$ be a minimum zero forcing set of $G$ that does not contain $v$ and $uv\in E(G)$. 
This means that $u$ forces $v$ in $G$. 
If $B'= B \cup \{a,c\}$, then $B'$ is a zero forcing set of $G_1$ since $u$ can force $v'$ in $G_1$. 
So $\Z(G_1) \le \Z(G) + 2$.  

Let $B'$ be a minimum zero forcing set for $G_1$. 
Since $\{a,b,c,e\}$, $\{a,b,c,f\}$, $\{a,b,d,e\}$, $\{a,b,d,f\}$, $\{a,b,e,f\}$, and $\{c,d\}$ are forts in $G_1$, $2 \le |B' \cap \{a,b,c,d,e,f\}|$. 
If $|B' \cap \{a,b,c,d,e,f\}| = 2$, then, without loss of generality, $a,c \in B'$; which implies $B' \cap V(G)$ is a zero forcing set of $G$. 
So $\Z(G) \le \Z(G_1) - 2$. 
If $3 \le |B' \cap \{a,b,c,d,e,f\}|$, then $\Z(G) \le \Z(G_1) - 2$ because $(B'\cap V(G)) \cup \{v\}$ is a zero forcing set of $G$. 
Therefore, $\Z(G_1) = \Z(G) + 2$.\end{proof}

Define $G_2(v)$ to be the graph obtained from $G$ by replacing a degree $2$ vertex $v$ in $G$ with $K_4 - e$. The notation for $G_2(v)$ will be shortened to $G_2$, when it is clear which vertex is being replaced.

 \begin{figure}[h!]
     \centering
     \captionsetup{width=0.8\textwidth}
     
    \begin{tikzpicture}
     \draw (-3,0) ellipse (1cm and 2cm) (-1.75,2.5) node [text=black,above] {$G$};
   
     \node[circle,fill=black,inner sep=0pt,minimum size=4pt,label=left:{$u_1$}] (x) at (-2.5,0.5) {};
     \node[circle,fill=black,inner sep=0pt,minimum size=4pt,label=right:{$v$}] (y) at (-0.5,0) {};
     \node[circle,fill=black,inner sep=0pt,minimum size=4pt,label=left:{$u_2$}] (z) at (-2.5,-0.5) {};
     Edges in G
    \Edge (x)(y);
     \Edge (y)(z);

     \draw (2,0) ellipse (1cm and 2cm) (3.25,2.5) node [text=black,above] {$G_2(v)$};
    
     \node[circle,fill=black,inner sep=0pt,minimum size=4pt,label=above:{$v_1$}] (c) at (4.5,1) {};
    
     \node[circle,fill=black,inner sep=0pt,minimum size=4pt,label=left:{$u_1$}] (x1) at (2.5,0.5) {};

     \node[circle,fill=black,inner sep=0pt,minimum size=4pt,label=left:{$u_2$}] (z1) at (2.5,-0.5) {};

     \node[circle,fill=black,inner sep=0pt,minimum size=4pt,label=below:{$v_2$}] (d) at (4.5,-1) {};

     \node[circle,fill=black,inner sep=0pt,minimum size=4pt,label=left:{$a$}] (a) at (4,0) {};

     \node[circle,fill=black,inner sep=0pt,minimum size=4pt,label=right:{$b$}] (b) at (5,0) {};

    \Edge(x1)(c)
    \Edge(a)(d)
    \Edge(c)(a)
    \Edge(c)(b)
    \Edge(b)(a)
    \Edge(d)(b)
    \Edge(d)(z1)

 \end{tikzpicture}
 \caption{Graphs $G$ and $G_2(v)$.}\label{fig:DegreeTwoBlowup}
 \end{figure}
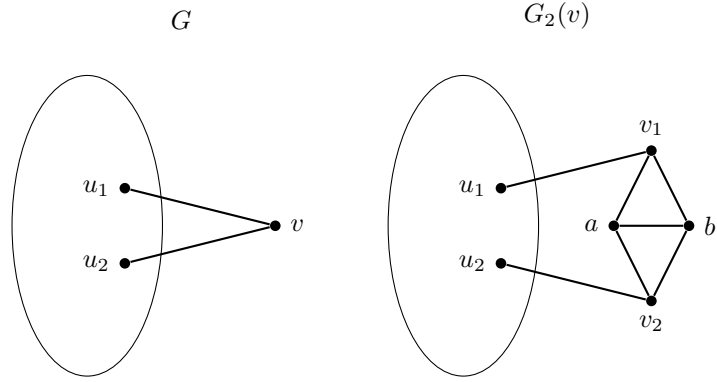

\begin{lemma}\label{lem:subcubicdegtwo}
If $G$ is a graph, $v\in V(G)$, and $\deg(v) = 2$, then $\alpha(G_2(v)) = \alpha(G) + 1$ and $\Z(G_2(v)) = \Z(G) + 1$.
\end{lemma}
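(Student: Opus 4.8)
The plan is to follow the template of Lemma~\ref{lem:subcubicleaves}: prove the two independence-number inequalities directly, bound $\Z$ from above by exhibiting a forcing set, and bound it from below by combining a fort argument with a simulation of the forcing process across the gadget. Throughout I label the copy of $K_4-e$ replacing $v$ by $v_1,v_2,a,b$ as in Figure~\ref{fig:DegreeTwoBlowup}, where $v_1v_2$ is the missing edge, $u_1v_1,u_2v_2\in E(G_2)$, and $a,b$ are the two internal vertices. The only independent pair inside $\{v_1,v_2,a,b\}$ is $\{v_1,v_2\}$, so every independent set of $G_2$ meets the gadget in at most two vertices.

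For $\alpha(G_2)\ge\alpha(G)+1$, take a maximum independent set $A$ of $G$: if $v\in A$ then $u_1,u_2\notin A$ and $(A\setminus\{v\})\cup\{v_1,v_2\}$ is independent in $G_2$, while if $v\notin A$ then $A\cup\{a\}$ is independent in $G_2$. For the reverse inequality, take a maximum independent set $A'$ of $G_2$: if $|A'\cap\{v_1,v_2,a,b\}|\le 1$ then $A'\cap V(G)$ is independent in $G$ of size at least $|A'|-1$, and if the intersection equals $\{v_1,v_2\}$ then $u_1,u_2\notin A'$, so $(A'\cap V(G))\cup\{v\}$ is independent in $G$ of size $|A'|-1$. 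Hence $\alpha(G_2)=\alpha(G)+1$.

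For $\Z(G_2)\le\Z(G)+1$, choose (using the stated existence of a minimum zero forcing set avoiding a prescribed vertex, applied in the component of $v$) a minimum zero forcing set $B$ of $G$ with $v\notin B$; some neighbor, say $u_1$, forces $v$. I claim $B\cup\{a\}$ forces $G_2$. The forcing of $G$ up to the moment $u_1$ would force $v$ touches no gadget vertex and is replayed verbatim in $G_2$; at that moment $u_1$'s only white neighbor is $v_1$, so $u_1\to v_1$, and then $v_1\to b\to v_2$ colors the gadget, after which $v_2\to u_2$ plays the role of $v\to u_2$. Replaying the remaining forces of $G$ then colors everything, giving a forcing set of size $\Z(G)+1$.

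The lower bound $\Z(G_2)\ge\Z(G)+1$ is the crux. Since $\{a,b\}$ is a fort of $G_2$, every zero forcing set $B'$ meets the gadget, so $g:=|B'\cap\{v_1,v_2,a,b\}|\ge 1$. Fix a chronological list of forces for $B'$ and record, for each port $u_iv_i$, whether the force across it is inward ($u_i\to v_i$), outward ($v_i\to u_i$), or absent. I would build a zero forcing set $D$ of $G$ from $C_0:=B'\cap V(G)$ by letting $v$ impersonate the gadget: if some port is inward then $v$ is forced from that side and $D=C_0$ suffices; if no port is inward then $v$ must be seeded, and $D=C_0\cup\{v\}$ plus one extra $u_i$ for each outward port beyond the first. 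The main obstacle is the outward \emph{double-push}: unlike the pendant gadget of Lemma~\ref{lem:subcubicleaves}, the degree-two gadget has two ports and can, when at least three of its vertices start blue, force \emph{both} $u_1$ and $u_2$, which a single vertex $v$ cannot mimic; indeed the naive reduction $(B'\cap V(G))\cup\{v\}$ fails when $G=K_3$ and $B'=\{v_1,v_2,a\}$. The remedy is a finite check on $K_4-e$ with both ports white and no inward help: the gadget cannot recolor itself from fewer than two blue vertices and cannot push outward across both ports from fewer than three. Thus whenever there is no inward force $g\ge 2$, and $g\ge 3$ if both ports go outward, so in every case $|D|\le |C_0|+(g-1)=|B'|-1$. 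Verifying this inequality and checking that $D$ genuinely forces $G$ in each port pattern is where the real work lies; the independence-number and upper-bound arguments are routine adaptations of the degree-one case.
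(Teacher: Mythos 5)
Your independence-number argument and your upper bound via $B\cup\{a\}$ coincide with the paper's, and your finite checks on the gadget are in fact correct: $g\ge 2$ when no inward force occurs, and $g\ge 3$ when both ports push outward (your $K_3$ example is exactly the right witness for why the naive reduction fails). The genuine gap is in the inward case of your lower bound, which you state as ``if some port is inward then $v$ is forced from that side and $D=C_0$ suffices.'' This is false when a port vertex is seeded. Take $G=C_4$ with vertices $v,u_1,x,u_2$ in cyclic order, so that in $G_2(v)$ we have the edges $u_1v_1$, $u_2v_2$, $u_1x$, $xu_2$ together with the gadget. The set $B'=\{v_2,b,u_2\}$ is a minimum zero forcing set of $G_2(v)$ of size $3=\Z(C_4)+1$, with chronological list $u_2\to x$, $x\to u_1$, $u_1\to v_1$ (inward at port $1$), $b\to a$. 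Here $C_0=B'\cap V(G)=\{u_2\}$, which does not force $C_4$: the force $u_2\to x$ was enabled by the \emph{seeded} $v_2$, whereas in $G$ the impersonating vertex $v$ is still white, and the inward force that would color it lies downstream of $x$ --- precisely the circularity your replay cannot break. The repair is to prioritize cases by $B'\cap\{v_1,v_2\}$ rather than by the port pattern of a fixed force list: if some $v_i\in B'$, put $v$ into $D$ (this is affordable, since $B'$ must also meet the fort $\{a,b\}$, so $g\ge 2$ and $|D|\le |B'|-2+1$); only when $B'\cap\{v_1,v_2\}=\emptyset$ is every force $u_i\to x$ with $x\ne v_i$ necessarily preceded by an inward force, which makes $D=C_0$ safe.

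For comparison, the paper's lower bound avoids your seed-count $g$ and the $g\ge 3$ analysis altogether by first normalizing: it chooses $B'$ to avoid $a$, so the fort $\{a,b\}$ forces $b\in B'$, and then splits on whether an inward force occurs and on $|B'\cap\{v_1,v_2\}|$, exhibiting in each case an explicit forcing set of $G$ of size $|B'|-1$ (for instance $(B'\cup\{u_1,v\})\setminus\{v_1,v_2,b\}$ when both $v_1,v_2\in B'$). Be aware that the paper's final case, $(B'\cap V(G))\setminus\{b\}$ when an inward force occurs, is vulnerable to the same $C_4$ example if read literally with a seeded $v_2$; the robust formulation lets the subcases with $B'\cap\{v_1,v_2\}\neq\emptyset$ take precedence over the inward subcase (or, equivalently, re-chooses the set of forces to avoid inward forces when possible --- for the $B'$ above, the list $v_2\to a$, $b\to v_1$, $v_1\to u_1$, $u_2\to x$ has no inward force and lands in a correct subcase). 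So your framework is salvageable and close in spirit to the paper's, but as written the inward rule is a real error, not mere deferred bookkeeping.
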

\begin{proof}Let $K_4-e$ replace $v$ in $G$, and be labelled by vertices $a,b,v_1$, and $v_2$ in $G_2(v)$ as in Figure \ref{fig:DegreeTwoBlowup}.
Let $A$ be a maximum independent set of $G$. 
    If $v\in A$, then $A' = (A\backslash\{v\})\cup \{v_1,v_2\}$ is independent in $G_2$. 
    If $v\notin A$, then $A\cup\{a\}$ is independent in $G_2$. 
    So $\alpha(G) + 1 \le \alpha(G_2)$.

    Let $A'$ be a maximum independent set of $G_2$. 
    Since $A' \setminus \{v_1,v_2,a,b\}$ is independent in $G$, it follows that $\alpha(G_2) \le \alpha(G) + 2$. 
    If $\alpha(G_2) = \alpha(G) + 2$, then $v_1,v_2 \in A'$; which implies that $(A'\backslash \{v_1,v_2\}) \cup \{v\}$ is independent in $G$ and has more than $\alpha(G)$ vertices, a contradiction. 
    Therefore, $\alpha(G_2) = \alpha(G) + 1$.

     Let $B$ be a minimum zero forcing set of $G$ that does not contain $v$, then $B \cup \{a\}$ is a zero forcing set for $G_2$. 
     So $\Z(G_2) \le \Z(G) + 1$.

    Let $B'$ be a minimum zero forcing set for $G_2$ that does not have vertex $a$ and $u_1v,u_2v\in E(G)$.  
    Since $\{a,b\}$ is a fort in $G_2$, $b$ must be in $B'$.  
    If $u_1$ does not force $v_1$ and $u_2$ does not force $v_2$, then $v_1$ or $v_2$ must be in $B'$.  
    If both $v_1$ and $v_2$ are in $B'$, then $(B' \cup \{u_1,v\}) \setminus \{v_1,v_2,b\}$ is a zero forcing set of $G$ implying $Z(G) \le Z(G_2) - 1$.  
    However, if only one of $v_1$ or $v_2$ is in $B'$, then $(B' \cap V(G))\cup \{v\}$ is a zero forcing set of $G$ also implying $Z(G) \le Z(G_2) - 1$.

     If $u_1$ forces $v_1$ or $u_2$ forces $v_2$, then $(B' \cap V(G)) \setminus \{b\}$ is a zero forcing set of $G$; which implies $Z(G) \le Z(G_2) - 1$.  
     Therefore, $\Z(G_2) = \Z(G) + 1$.
     \end{proof}

\begin{theorem}\label{thm:cubictosubcubic}
If $c\in \mathbb{Z}$ and $\Z(H) \leq \alpha(H)+c$ for all connected cubic graphs $H$, then $\Z(G) \leq \alpha(G)+c$ for all connected subcubic graphs $G$. 
\end{theorem}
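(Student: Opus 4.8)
The plan is to exploit the fact that the two replacement operations of Lemmas~\ref{lem:subcubicleaves} and~\ref{lem:subcubicdegtwo} each raise $\Z$ and $\alpha$ by the \emph{same} amount, so that the quantity $\Z(\cdot) - \alpha(\cdot)$ is invariant under both. Concretely, Lemma~\ref{lem:subcubicleaves} gives $\Z(G_1(v)) - \alpha(G_1(v)) = (\Z(G)+2) - (\alpha(G)+2) = \Z(G) - \alpha(G)$ for a degree-$1$ vertex $v$, and Lemma~\ref{lem:subcubicdegtwo} gives the analogous equality for a degree-$2$ vertex. Hence if I can transform any connected subcubic graph $G$ into a connected \emph{cubic} graph $G^\ast$ by a sequence of these replacements, then applying the hypothesis to $G^\ast$ yields $\Z(G^\ast) \le \alpha(G^\ast) + c$, and invariance of the difference immediately gives $\Z(G) \le \alpha(G) + c$.

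First I would record the key degree bookkeeping. Each replacement deletes exactly one vertex of degree at most $2$ and introduces only vertices of degree $3$: the gadget ($H_1$ for a degree-$1$ vertex, $K_4-e$ for a degree-$2$ vertex) is glued to $G$ by rerouting each edge formerly incident to $v$ to a distinct degree-$2$ vertex of the gadget, so every neighbor of $v$ \emph{keeps} its degree while the gadget's attachment vertices become degree $3$. In particular no vertex ever has its degree raised, so every intermediate graph is again subcubic and connected, and the number of vertices of degree $1$ or $2$ strictly decreases with each replacement.

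Then I would iterate: as long as the current graph has a vertex of degree $1$ or of degree $2$, apply the operation $G_1$ or $G_2$ respectively, invoking the corresponding lemma. Because the number of low-degree vertices drops by one at each step, after finitely many steps the process halts with a graph $G^\ast$ in which every vertex has degree $3$, that is, a connected cubic graph. Each step preserves $\Z - \alpha$ by the two lemmas, so $\Z(G^\ast) - \alpha(G^\ast) = \Z(G) - \alpha(G)$, and applying the hypothesis to the cubic graph $G^\ast$ completes the argument.

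The only real obstacle is verifying that the iteration is a legitimate application of the lemmas at every stage. I must confirm that each intermediate graph is connected and nontrivial (so that the lemmas' reliance on a minimum zero forcing set avoiding $v$ remains valid), that the gadget attachments never create a multi-edge (so $G^\ast$ is a genuine simple cubic graph), and that the procedure terminates in a cubic graph rather than stalling; all three follow from the edge-rerouting observation above, since $v$ had at least one incident edge and its distinct neighbors are matched to distinct gadget vertices. Finally, the degenerate case $G = K_1$, which has no vertex to replace, must be dispatched separately: applying the hypothesis to $K_4$ forces $c \ge \Z(K_4) - \alpha(K_4) = 3 - 1 = 2$, while $\Z(K_1) = \alpha(K_1) = 1$, so the bound holds there too.
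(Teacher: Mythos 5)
Your proposal is correct and follows essentially the same route as the paper: replace each degree-$1$ vertex with $H_1$ and each degree-$2$ vertex with $K_4-e$, observe via Lemmas~\ref{lem:subcubicleaves} and~\ref{lem:subcubicdegtwo} that $\Z-\alpha$ is invariant, and apply the cubic hypothesis to the resulting cubic graph. Your additional bookkeeping (termination, simplicity, connectivity, and the $G=K_1$ corner case via $c\ge \Z(K_4)-\alpha(K_4)=2$) is sound and in fact slightly more careful than the paper's own proof, which performs all replacements at once and leaves these checks implicit.
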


 \begin{proof} Let $G$ be a connected subcubic graph and $H$ be obtained from $G$ by replacing every degree $1$ vertex in $G$ with $H_1$ and every degree $2$ vertex of $G$ with $K_4-e$. 
 By Lemmas \ref{lem:subcubicleaves} and \ref{lem:subcubicdegtwo}, in each instance that a vertex is replaced the zero forcing number and the independence number both change by the same amount.  
This implies that $\alpha(H) - \Z(H) = \alpha(G) - \Z(G)$.  
Therefore, $\Z(G) \le \alpha(G) + c$ since $\Z(H) \le \alpha(H) + c$.\end{proof}

In \cite{DH} it was shown that $\Z(G) \le \alpha(G) + 1$ for connected, claw-free, cubic graphs. 
The proof of Theorem \ref{thm:cubictosubcubic} uses replacements to change subcubic graphs into cubic graphs while the zero forcing number and the independence number change by the same amount. 
Since the graphs used to replace vertices, $H_1$ and $K_4-e$, are both claw free, this technique can be applied to claw-free subcubic graphs to give the following result.   

\begin{theorem}\label{thm:subcubclawfreealpha+1}
    If $G\neq K_4$ is a connected, claw-free, subcubic graph, then \[\Z(G) \le \alpha(G) + 1.\]
\end{theorem}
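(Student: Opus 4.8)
The plan is to reuse the blow-up from the proof of Theorem~\ref{thm:cubictosubcubic}. Starting from a connected, claw-free, subcubic graph $G \neq K_4$, form a cubic graph $H$ by replacing every degree $1$ vertex of $G$ with $H_1$ and every degree $2$ vertex with $K_4-e$. By Lemmas~\ref{lem:subcubicleaves} and~\ref{lem:subcubicdegtwo}, each replacement raises $\alpha$ and $\Z$ by the same amount, so $\alpha(H)-\Z(H)=\alpha(G)-\Z(G)$. Hence, once $H$ is shown to be a connected claw-free cubic graph with $H\neq K_4$, Theorem~\ref{thm:clawfree} gives $\Z(H)\le\alpha(H)+1$, and the desired bound $\Z(G)\le\alpha(G)+1$ follows at once. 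Note the reason we cannot simply cite Theorem~\ref{thm:cubictosubcubic} with $c=1$ is that its hypothesis fails: $\Z\le\alpha+1$ is not known for \emph{all} cubic graphs, only claw-free ones, so claw-freeness of $H$ must be established directly.

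First I would clear the routine structural checks. The replacements delete exactly the vertices of degree at most $2$ and leave every surviving vertex (the original degree $3$ vertices together with all gadget vertices) of degree $3$, so $H$ is cubic; since each gadget is connected and attached to the rest of the graph along single edges, $H$ is connected. If $G$ is already cubic then $H=G$ and Theorem~\ref{thm:clawfree} applies directly, the hypothesis $G\neq K_4$ being exactly what is needed. Otherwise at least one gadget is inserted, $H$ has more than four vertices, and the cubic graph $H$ is therefore not $K_4$.

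The heart of the matter, and the step I expect to be the main obstacle, is verifying that $H$ is claw-free. Claw-freeness is local, so one inspects each vertex. The gadget-internal vertices are never claw centers: $H_1$ and $K_4-e$ are themselves claw-free, and at every attachment vertex the two gadget-neighbors are adjacent, so no claw appears there. For an original vertex $u$, its induced neighborhood in $H$ matches that in $G$ except that a replaced neighbor $v$ of degree at most $2$ is swapped for a gadget connection vertex, which inherits the same non-adjacencies to the remaining neighbors of $u$ — \emph{unless} $v$ had degree $2$ and lay in a triangle $uvu'$. In that case $v\sim u'$ in $G$, but in $K_4-e$ the two attachment vertices are non-adjacent, so the edge from $u$'s gadget-neighbor to $u'$ is lost; if $u'$ is not adjacent to the third neighbor of $u$, the edge that witnessed claw-freeness at $u$ is destroyed and a claw is created. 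Thus the construction as stated does not automatically preserve claw-freeness.

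Consequently the crux is disposing of degree $2$ vertices lying in triangles. I would handle such a vertex $v$ with a \emph{modified} claw-free gadget in which the two attachment points are identified, so that $u$ and $u'$ acquire a common gadget neighbor $p$ and the triangle survives as $uu'p$ (one can realize $p$ as a degree $2$ vertex whose two neighbors are adjacent inside a suitable small cubic-interior gadget, keeping the whole insertion claw-free). The remaining task is then to re-establish, in the style of Lemmas~\ref{lem:subcubicleaves} and~\ref{lem:subcubicdegtwo}, that this gadget shifts $\alpha$ and $\Z$ by equal amounts; alternatively one could prove a preliminary reduction that eliminates degree $2$ vertices in triangles while preserving both claw-freeness and the quantity $\alpha-\Z$. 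With the triangle case settled, every vertex of $H$ is confirmed not to be a claw center, $H$ is a connected claw-free cubic graph distinct from $K_4$, and the reduction to Theorem~\ref{thm:clawfree} completes the argument.
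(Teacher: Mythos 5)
Your plan coincides exactly with the paper's own justification: the paper gives no separate proof of this theorem, only the one-sentence remark that since $H_1$ and $K_4-e$ are claw-free, the replacement technique of Theorem~\ref{thm:cubictosubcubic} combined with Theorem~\ref{thm:clawfree} yields the result. Your objection to that justification is correct, and it exposes a genuine gap in the \emph{paper's} argument, not merely a difficulty of your own making. Claw-freeness of the inserted gadgets does not rule out claws straddling the boundary: in $G_2(v)$ the two attachment vertices $v_1,v_2$ of $K_4-e$ are non-adjacent (Figure~\ref{fig:DegreeTwoBlowup}), so if $v$ is a degree-$2$ vertex in a triangle $u_1vu_2$, the edge $vu_2$ that witnessed claw-freeness at $u_1$ is severed. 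Concretely, let $G$ be the triangle $u_1u_2v$ with a pendant vertex $w$ at $u_1$ and a pendant vertex $w'$ at $u_2$; this $G$ is connected, claw-free, subcubic, and not $K_4$, yet after the replacements the vertex $u_1$ has the three pairwise non-adjacent neighbors $u_2$, $v_1$, and the attachment vertex of $w$'s copy of $H_1$, which is an induced claw. So the cubic graph $H$ produced by the paper's construction need not be claw-free, and Theorem~\ref{thm:clawfree} cannot be invoked for it; your diagnosis that the failure is localized precisely at degree-$2$ vertices lying in triangles (including the adjacent-degree-$2$ variants) is also accurate, since claws centered inside or at the attachment vertices of $H_1$ and $K_4-e$ are impossible and degree-$1$ replacements never disturb the neighborhood structure at the host vertex.

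That said, your proposal does not yet close the hole it correctly opens. The modified gadget with identified attachment points is only gestured at, and the most natural minimal realization fails: if $p\sim u_1,u_2$ and $p$ is attached to the degree-$2$ vertex $\ell'$ of a $K_4$ with a subdivided edge, then $\ell'$ has neighbors $p,a,b$ with $a\not\sim b$ (the subdivision destroyed that edge), so $\ell'$ becomes a claw center. A claw-free interior does exist --- for instance $p\sim u_1,u_2,q$ with interior vertices $q,a,b,c,d,e,f$ and edges $qa,qb,ab,ac,bd,ce,cf,de,df,ef$, where every vertex has two adjacent neighbors --- but exhibiting such a gadget is the easy half. The substantive missing step is the analogue of Lemma~\ref{lem:subcubicdegtwo} for it: you must determine exactly how this replacement changes $\alpha$ and $\Z$ and show the two shifts agree (fort arguments on the new interior together with explicit zero forcing set constructions, in the style of Lemmas~\ref{lem:subcubicleaves} and~\ref{lem:subcubicdegtwo}), or else carry out in full the alternative preliminary reduction you mention, with the same bookkeeping and with claw-freeness and connectivity preserved. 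As it stands, your write-up is a correct refutation of the paper's stated proof plus a plausible but unverified repair, not a complete proof of the theorem.
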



However, this can be generalized in a way that does not require the graph to be to be claw free by replacing claw centers. Theorem \ref{thm:subcubicbound} give such a result.

Define $G'(v)$ to be the graph obtained from replacing degree $3$ vertex $v$ in $G$ with $K_3$. The notation for $G'(v)$ will be shortened to $G'$, when it is clear which vertex is being replaced. 

\begin{figure}[h!]
    \centering
    \captionsetup{width=0.8\textwidth}
   
    \begin{tikzpicture}
        \draw (-1,0) ellipse (2cm and 1cm) (-1,1)  node [text=black,above] {$G$};
        
        \node[circle,fill=black,inner sep=0pt,minimum size=4pt,label=above:{$u_1$}] (u_1) at (-2,-.5) {};
        
        \node[circle,fill=black,inner sep=0pt,minimum size=4pt,label=above:{$u_2$}] (u_2) at (-1,-.5) {};

        \node[circle,fill=black,inner sep=0pt,minimum size=4pt,label=above:{$u_3$}] (u_3) at (0,-.5) {};

        \node[circle,fill=black,inner sep=0pt,minimum size=4pt,label=below:{$v$}] (c) at (-1,-2) {};

        \draw (4,0) ellipse (2cm and 1cm) (4,1)  node [text=black,above] {$G'(v)$};
        
        \node[circle,fill=black,inner sep=0pt,minimum size=4pt,label=above:{$u_1$}] (u_11) at (3,-.5) {};
        
        \node[circle,fill=black,inner sep=0pt,minimum size=4pt,label=above:{$u_2$}] (u_21) at (4,-.5) {};

        \node[circle,fill=black,inner sep=0pt,minimum size=4pt,label=above:{$u_3$}] (u_31) at (5,-.5) {};

        \node[circle,fill=black,inner sep=0pt,minimum size=4pt,label=below:{$v_2$}] (v2) at (4,-2) {};

        \node[circle,fill=black,inner sep=0pt,minimum size=4pt,label=below:{$v_1$}] (v1) at (3,-3) {};

        \node[circle,fill=black,inner sep=0pt,minimum size=4pt,label=below:{$v_3$}] (v3) at (5,-3) {};
		
        \Edge(c)(u_1)
        \Edge(c)(u_2)
        \Edge(c)(u_3)
        \Edge(v1)(u_11)
        \Edge(v2)(u_21)
        \Edge(v3)(u_31)
        \Edge(v1)(v2)
        \Edge(v2)(v3)
        \Edge(v3)(v1)
	 	
	 \end{tikzpicture}
    \caption{Graphs $G$ and $G'(v)$.}
    \label{fig:triangle}
\end{figure}
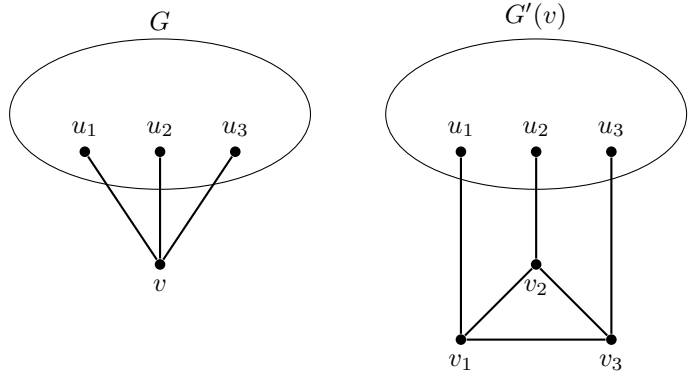

\begin{lemma}\label{lem:subcubictriangle}
    If $G$ is a graph, $v\in V(G)$, and $\deg(v) = 3$, then $\alpha(G'(v)) \le \alpha(G) + 1$ and $\Z(G) \le \Z(G'(v))$.
\end{lemma}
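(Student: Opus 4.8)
The plan is to prove the two inequalities separately; the independence bound is short, while the zero forcing bound is the substantive part, so I will spend most of the effort there.

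For $\alpha(G'(v)) \le \alpha(G) + 1$, let $A'$ be a maximum independent set of $G'$. Since $\{v_1,v_2,v_3\}$ induces a triangle, $|A' \cap \{v_1,v_2,v_3\}| \le 1$. The remaining vertices $A' \setminus \{v_1,v_2,v_3\}$ lie in $V(G)\setminus\{v\}$, on which $G$ and $G'$ induce exactly the same subgraph, so this set is independent in $G$ and hence has at most $\alpha(G)$ vertices. Therefore $|A'| \le \alpha(G) + 1$, and I am confident this step needs no further machinery.

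For $\Z(G) \le \Z(G')$, the idea is to start from a minimum zero forcing set $B'$ of $G'$ and contract the triangle back to $v$. Writing $\pi$ for the map sending each $v_i$ to $v$ and fixing every other vertex, my candidate set is $B = \pi(B')$, which satisfies $|B| = |B'| - \max\{k-1,\,0\}$ where $k = |B' \cap \{v_1,v_2,v_3\}|$, so $|B| \le |B'|$. The obstruction is that the three triangle vertices can collectively force more than one of the external neighbors $u_1,u_2,u_3$, whereas the single vertex $v$ forces at most one neighbor (a genuine issue: small examples show $\pi(B')$ can fail to force $G$ for a poorly chosen $B'$). To control this I would fix a chronological list of forces for $B'$ and classify each boundary edge $v_iu_i$ as a \emph{fan-in} ($u_i$ forces $v_i$), a \emph{fan-out} ($v_i$ forces $u_i$), or unused; writing $g$ and $f$ for the numbers of fan-ins and fan-outs, each boundary edge is used at most once in the forcing so $f + g \le 3$, hence $\min\{f,g\} \le 1$. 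Using the standard fact that reversing a set of forces yields another zero forcing set of the same cardinality (with fan-ins and fan-outs interchanged), I may assume $f \le 1$, so the triangle forces out to at most one external neighbor, exactly what $v$ can reproduce.

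With $f \le 1$ I would verify that $B = \pi(B')$ forces $G$ by replaying the forces of $B'$ in order: triangle-internal forces and all but at most one fan-in collapse to ``$v$ is already blue'' and are skipped, each $u_i$-force transfers verbatim since $u_i$ has the same neighbors in $G$ as in $G'$ with $v$ in the role of $v_i$, and the unique fan-out becomes $v$ forcing its last white neighbor. The hard part is precisely this scheduling verification: I must rule out a deadlock in which $v$'s lone fan-out to some $u_{i_0}$ is blocked because an unused boundary neighbor $u_m$ is still white, while $u_m$ only becomes blue downstream of $u_{i_0}$. I expect to dispose of this by choosing $B'$ and its forcing order to minimize $f$ (equivalently, invoking reversal as above) and, in the few remaining tight configurations, absorbing the obstruction with a bounded number of extra seed vertices whose cost never exceeds the slack $\max\{k-1,\,0\}$ gained by contracting the initially blue triangle vertices; this accounting is governed by the companion inequality $k + g \le 3$, which holds because each of the $3-k$ non-seed triangle vertices is forced exactly once, along an internal edge or a fan-in edge.
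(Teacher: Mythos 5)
Your proof of $\alpha(G'(v)) \le \alpha(G) + 1$ is exactly the paper's argument and is fine. The zero forcing half, however, has a genuine gap, and it sits precisely at the deadlock you flagged. The structural problem is that in $G'$ a fan-out $v_{i_0} \to u_{i_0}$ only requires the other two \emph{triangle} vertices to be blue, while the corresponding force $v \to u_{i_0}$ in $G$ requires the other two \emph{external} neighbors $u_m$ to be blue --- a strictly stronger condition. Your two tools cannot always pay for this. Reversal only secures $f \le 1$, which does not touch the scheduling issue; and your budget of $\max\{k-1,0\}$ extra seeds is zero exactly when the deadlock can occur. (When $k=0$ it cannot occur: the first two triangle vertices to turn blue must both arrive by fan-ins, so at fan-out time the two non-target $u_m$'s have already performed forces and are blue. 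The dangerous case is precisely $k=1$.) Concretely: let $G$ be the $4$-cycle $v\,u_3\,x\,u_2\,v$ with a pendant vertex $u_1$ attached to $v$. Then $B' = \{u_1, v_2\}$ is a minimum zero forcing set of $G'$ (so $\Z(G')=2$), and its $f$-minimizing chronological list is $u_1 \to v_1$, $v_1 \to v_3$, $v_3 \to u_3$, $u_3 \to x$, $x \to u_2$, with $f=1$, $g=1$, $k=1$, hence slack $0$; yet $\pi(B') = \{u_1, v\}$ performs no force at all in $G$, since $v$ has two white neighbors $u_2, u_3$. So a configuration satisfying every condition of your written procedure fails. (In this instance the reversal happens to project to a working set, but you give no argument that reversal always resolves the deadlock --- you invoke it only to bound $f$, and here $f=1$ already.)

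The paper's proof shows what the missing idea is: when exactly one triangle vertex is seeded, do \emph{not} project it to $v$. After normalizing $B'$ to avoid $v_3$, the paper in the $k=1$ case replaces the lone triangle seed $v_1$ by a vertex outside the triangle --- either $u_1$, or the vertex $x$ that $u_1$ forces before the first fan-in --- so that $v$ stays unseeded, is later forced from outside by the fan-in, and by that time two of $v$'s neighbors are already blue, so the eventual fan-out from $v$ is never blocked. In the $k=2$ case the paper seeds $\{v, u_1\}$, i.e., it spends the one unit of slack on a neighbor of $v$ rather than pocketing a smaller set --- which also explains why your accounting is sound only for $k \ge 2$. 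On the example above the paper's recipe outputs $\{u_1, u_2\}$, which does force $G$. Your overall architecture (replaying a chronological list across the contraction, the bound $f+g \le 3$, the verbatim transfer of forces by the $u_i$'s) is sound and close in spirit to the paper's, but as written the $k=1$ case is an acknowledged unproven step whose proposed resolution is demonstrably insufficient, so the proof is incomplete without the paper's replacement trick or a genuine argument that some choice of $B'$, chronological list, or reversal always avoids the deadlock.
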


\begin{proof}

Let $K_3$ replace $v$ in $G$, and be labelled by vertices $v_1,v_2$, and $v_3$ in $G'(v)$ as in Figure \ref{fig:triangle}.
Let $A'$ be a maximum independent set of $G'$. 
There is at most one vertex of $A'$ in $\{v_1,v_2,v_3\}$ and $A' \setminus \{v_1,v_2,v_3\}$ is an independent set of $G$. 
Therefore, $\alpha(G') \leq \alpha(G) + 1$.

Let $B'$ be a minimum zero forcing set of $G'$ that does not contain $v_3$.  Also, let $u_1,u_2$, and $u_3$ be the neighbors of $v$ in $G$ and the neighbors of $v_1, v_2$, and $v_3$, respectively, in $G'$.

Assume $v_1,v_2\in B'$ and, without loss of generality, $v_2$ forces $v_3$ or $u_3$ forces $v_3$ in $G'$. 
 Let $B = (B' \setminus \{v_1,v_2\}) \cup \{v,u_1\}$.

If $v_2$ forces $v_3$ in $G'$, every force in $G'$ before $v_2$ forces $v_3$ can be performed in $G$, resulting in $u_1$ and $u_2$ being blue. This allows $v$ to force $u_3$ and the process can continue to force all of $G$, so $B$ is a zero forcing set of $G$.

If $u_3$ forces $v_3$ in $G'$, every force in $G'$ before $u_3$ forces $v_3$ can be performed in $G$, resulting in $u_1$ and $u_3$ being blue. This allows $v$ to force $u_2$ and the process can continue to force all of $G$, so $B$ is a zero forcing set of $G$.

Assume $v_1\in B'$ and $v_2\notin B'$.  In $G'$, $v_1$ cannot perform a force until either $u_2$ forces $v_2$ or $u_3$ forces $v_3$. Without loss of generality, assume that $u_2$ forces $v_2$.

If $u_1$ must force some vertex $x \neq v_1$ before $u_2$ forces $v_2$, then let $B = (B' \setminus \{v_1\}) \cup \{x\}$. Every force in $G'$ before $u_1$ forces $x$ can be performed in $G$ and every force in $G'$ that occurs after $u_1$ forces $x$ but before $u_2$ forces $v_2$ can also be performed in $G$. This allows $u_2$ to force $v$ and the process can continue to force all of $G$, so $B$ is a zero forcing set of $G$.

If $u_1$ does not need to perform a force before $u_2$ forces $v_2$, then let $B = (B' \setminus \{v_1\}) \cup \{u_1\}$. Every force in $G'$ before $u_2$ forces $v_2$ can be performed in $G$. This allows $u_2$ to force $v$ and the process can continue to force all of $G$, so $B$ is a zero forcing set of $G$.

Assume $\{v_1,v_2,v_3\}\cap B' = \emptyset$ and, without loss of generality, that $u_2$ forces $v_2$ before $u_3$ forces $v_3$. Every force in $G'$ before $u_3$ forces $v_3$ can be performed in $G$, with $u_2$ forcing $v$ and resulting in $u_2$ and $u_3$ being blue. 
This allows $v$ to force $u_1$ and the process can continue to force all of $G$, so $B'$ is also a zero forcing set of $G$.\end{proof}

\begin{theorem}\label{thm:subcubicbound}If $G\neq K_4$ is a connected subcubic graph with $C$ claw centers, then

\[\Z(G) \le \alpha(G) + 1 +C.\]
\end{theorem}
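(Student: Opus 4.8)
The plan is to repeatedly apply Lemma \ref{lem:subcubictriangle}, replacing each claw center with a triangle until no claws remain, and then invoke Theorem \ref{thm:subcubclawfreealpha+1} on the resulting claw-free graph. The crucial point that makes this work is that replacing a single claw center $v$ by $G'(v)$ reduces the number of claw centers by exactly one: it destroys $v$ without creating any new claw centers elsewhere.

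To verify this, I would first record that a claw center $v$ has degree exactly $3$, so its neighborhood $N(v) = \{u_1, u_2, u_3\}$ is an independent set. The essential observation is that this forces $v$ to be non-adjacent to every ``second neighbor'' reached through a $u_i$: if $w \neq v$ is a neighbor of $u_i$, then $w \notin \{u_1, u_2, u_3\}$, since otherwise two of the $u_j$ would be adjacent, and as $N(v) = \{u_1,u_2,u_3\}$ this gives $vw \notin E(G)$. Now I examine claw-center status vertex by vertex in $G'(v)$. The three new triangle vertices $v_1, v_2, v_3$ each have two mutually adjacent neighbors, so none of them is a claw center. For a neighbor $u_i$ with remaining neighbors $w_1, w_2$, its neighborhood is $\{v, w_1, w_2\}$ in $G$ and $\{v_i, w_1, w_2\}$ in $G'(v)$; since $vw_1, vw_2 \notin E(G)$ by the observation above and $v_i$ is adjacent outside the triangle only to $u_i$, both of these neighborhoods are independent under exactly the same condition, namely that $w_1$ and $w_2$ are non-adjacent. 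Every remaining vertex of $G'(v)$ has the same neighborhood with the same induced edges as in $G$, so its status is unchanged. Hence $G'(v)$ has exactly $C - 1$ claw centers.

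With this in hand, I would iterate. Starting from $G = G^{(0)}$, I choose a claw center and replace it to obtain $G^{(1)}, G^{(2)}, \ldots, G^{(C)}$, where $G^{(C)}$ is claw-free. Each replacement keeps the graph connected and subcubic and acts on a degree-$3$ vertex, so Lemma \ref{lem:subcubictriangle} applies at every stage, giving $\alpha(G^{(j+1)}) \le \alpha(G^{(j)}) + 1$ and $\Z(G^{(j)}) \le \Z(G^{(j+1)})$. Telescoping these yields $\Z(G) \le \Z(G^{(C)})$ and $\alpha(G^{(C)}) \le \alpha(G) + C$. I would then check that $G^{(C)} \neq K_4$: if $C = 0$ this is the hypothesis, while if $C \ge 1$ then $G$ already has at least $4$ vertices, so $G^{(C)}$ has at least $4 + 2C > 4$ vertices. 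Applying Theorem \ref{thm:subcubclawfreealpha+1} to the connected claw-free subcubic graph $G^{(C)}$ gives $\Z(G^{(C)}) \le \alpha(G^{(C)}) + 1$, and combining the inequalities yields
\[
\Z(G) \le \Z(G^{(C)}) \le \alpha(G^{(C)}) + 1 \le \alpha(G) + C + 1.
\]

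The step I expect to be the main obstacle is the claim in the second paragraph that the triangle replacement never manufactures a new claw center. Lemma \ref{lem:subcubictriangle} already controls how $\alpha$ and $\Z$ change, so the whole induction on $C$ rests on showing that each replacement decreases the claw-center count by exactly one; the non-adjacency observation about second neighbors is precisely what makes this work.
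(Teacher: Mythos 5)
Your proposal is correct and follows essentially the same route as the paper: the paper's proof likewise replaces every claw center with $K_3$, applies Lemma \ref{lem:subcubictriangle} to get $\Z(G) \le \Z(H)$ and $\alpha(H) \le \alpha(G) + C$, and then invokes Theorem \ref{thm:subcubclawfreealpha+1} on the claw-free result. The paper simply asserts that the resulting graph is claw-free; your careful verification that each triangle replacement destroys one claw center without creating any new ones (via the second-neighbor non-adjacency observation), and your check that $G^{(C)} \neq K_4$, make explicit exactly the details the paper's one-line proof leaves implicit.
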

\begin{proof}
    Let $H$ be obtained from $G$ by replacing every claw center of $G$ with $K_3$.
    By Lemma \ref{lem:subcubictriangle}, $\Z(G) \le \Z(H)$ and $\alpha(H) \le \alpha(G) + C$.  Since $H$ is claw-free, Theorem \ref{thm:subcubclawfreealpha+1} implies $\Z(H) \le \alpha(H) + 1$. 
\end{proof}

Lemma \ref{lem:upper-odd-deg} allows Corollary \ref{cor:onefacetwoface} to be generalized to subcubic graphs as it implies that $G_1(v)$ and $G_2(v)$ preserve upper-embeddability when $\deg(v) < 3$.

\begin{lemma}[\cite{upperembedJPX}]\label{lem:upper-odd-deg}
If $G$ is a graph, $v \in V(G)$ has odd degree, and $G-v$ is upper-embeddable, then $G$ is upper-embeddable. 
\end{lemma}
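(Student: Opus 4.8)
The plan is to prove the statement through Xuong's combinatorial characterization of the maximum genus. For a connected graph $G$ write $m(G) = |E(G)| - |V(G)| + 1$ for its cycle rank, so that $G$ is upper-embeddable exactly when $\gamma_M(G) = \lfloor m(G)/2\rfloor$. Xuong's theorem expresses $\gamma_M(G) = \tfrac{1}{2}\left(m(G) - \xi(G)\right)$, where the \emph{Betti deficiency} $\xi(G)$ is the minimum, over all spanning trees $T$ of $G$, of the number of components of the cotree $G - E(T)$ that have an odd number of edges. Since $\xi(G) \equiv m(G) \pmod 2$ and $\xi(G) \ge 0$, being upper-embeddable is equivalent to $\xi(G) \le 1$. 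Thus the goal becomes: given $\xi(G - v) \le 1$ and $\deg(v)$ odd, produce a spanning tree of $G$ whose cotree has at most one component with an odd number of edges.

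First I would treat the main case, where $G$ and $G - v$ are connected. Choose a spanning tree $T'$ of $G - v$ achieving $\xi(G-v) \le 1$, so the cotree $C' = E(G - v) \setminus E(T')$ has at most one component with an odd number of edges. Fix any edge $e_0$ incident to $v$ (it exists since $\deg(v)$ is odd, hence positive) and let $T = T' + e_0$, a spanning tree of $G$. The cotree of $G$ with respect to $T$ is $C = C' \cup S$, where $S$ consists of the remaining $\deg(v) - 1$ edges at $v$. The crucial observation is that $|S| = \deg(v) - 1$ is \emph{even}. In the cotree graph the edges of $S$ form a star centered at $v$, so passing from $C'$ to $C$ merges exactly those components of $C'$ that meet a neighbor of $v$ along $S$ into a single new component $D_0$ containing $v$, while leaving the remaining components of $C'$ untouched.

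The number of edges of $D_0$ is $|S|$ plus the numbers of edges of the merged components; because $|S|$ is even, the parity of $D_0$ equals the combined parity of the components it absorbs. A short case analysis then finishes the argument. If $C'$ has no odd component, every absorbed component is even, so $D_0$ is even and $C$ has no odd component. If $C'$ has a unique odd component $D^*$, then either $D^*$ is absorbed into $D_0$, making $D_0$ the only odd component of $C$ (all untouched components being even), or $D^*$ is not absorbed, in which case $D_0$ is even and $D^*$ remains the only odd component of $C$. In every case $C$ has at most one odd component, so $\xi(G) \le 1$ and $G$ is upper-embeddable.

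The step I expect to be the main obstacle is the connectivity bookkeeping when $G$ is connected but $G - v$ is not. Then $v$ must contribute one tree edge to each component of $G - v$, so the number of cotree edges at $v$ is $\deg(v)$ minus the number of components of $G - v$, whose parity is no longer forced to be even; controlling it requires combining the parity computation above with the additivity of $\xi$ over the components of $G - v$ and using the odd-degree hypothesis to pair up the components carrying odd cycle rank. The degenerate case $\deg(v) = 1$ is immediate, since then $S = \emptyset$ and the cotree of $G$ equals that of $G - v$, whence $\xi(G) = \xi(G - v) \le 1$.
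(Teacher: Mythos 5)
The paper does not prove Lemma \ref{lem:upper-odd-deg}; it imports the result from \cite{upperembedJPX}, so there is no internal proof to compare against. Your argument is correct and is essentially the classical Jaeger--Payan--Xuong proof: using Xuong's characterization (upper-embeddable iff the Betti deficiency satisfies $\xi(G)\le 1$), extend an optimal spanning tree of $G-v$ by a single edge at $v$, and note that the remaining $\deg(v)-1$ cotree edges at $v$ form an \emph{even} star, so the component they create has the combined parity of the cotree components it absorbs, leaving at most one odd component; your case analysis here is complete and sound. The one step you flag as an obstacle --- $G-v$ disconnected --- is in fact vacuous under the paper's definitions: $\gamma_M$ is defined via cellular embeddings on a closed orientable surface, and a cellular embedding forces connectivity, so the hypothesis that $G-v$ is upper-embeddable already entails that $G-v$ (and hence $G$, since $\deg(v)\ge 1$) is connected. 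With that observation your ``main case,'' which as written also covers $\deg(v)=1$, constitutes the entire proof.
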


Similar to the proof of
Theorem \ref{thm:cubictosubcubic}, the proof of  Theorem \ref{thm:subcubicupper} uses replacements to control the zero forcing number, the independence number, and, by Lemma \ref{lem:upper-odd-deg}, upper-embeddability.

\begin{theorem}\label{thm:subcubicupper}
  If $G$ is a connected, subcubic, upper-embeddable graph, then $Z(G) \le \alpha(G) + 2$.
\end{theorem}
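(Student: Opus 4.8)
The plan is to mimic the proof of Theorem~\ref{thm:cubictosubcubic}, but to additionally track upper-embeddability. Starting from the connected subcubic upper-embeddable graph $G$, I would form the cubic graph $H$ by replacing every degree~$1$ vertex of $G$ with $H_1$ and every degree~$2$ vertex with $K_4-e$, exactly as in Theorem~\ref{thm:cubictosubcubic}. Lemmas~\ref{lem:subcubicleaves} and \ref{lem:subcubicdegtwo} guarantee that each replacement changes $\alpha$ and $\Z$ by the same amount, so $\alpha(H)-\Z(H)=\alpha(G)-\Z(G)$. Since $H$ is a connected cubic graph, once I know $H$ is upper-embeddable, Corollary~\ref{cor:onefacetwoface} yields $\Z(H)\le\alpha(H)+2$, and rearranging the bookkeeping identity gives $\Z(G)=\alpha(G)-\alpha(H)+\Z(H)\le\alpha(G)+2$, as desired.

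The substance of the argument is therefore to show that each replacement preserves upper-embeddability, so that the hypothesis on $G$ propagates to $H$. I would treat the two gadgets separately, in both cases reducing to Lemma~\ref{lem:upper-odd-deg}. For a degree~$1$ vertex $v$ with neighbor $u$, identifying $v$ with the vertex $v'$ of Figure~\ref{fig:DegreeOneBlowup} realizes $G$ as a spanning subgraph of $G_1(v)$, so $G_1(v)$ is obtained from $G$ by adjoining the six new vertices $a,b,c,d,e,f$. I would add them in the order $a,e,c,f,b,d$; a direct check of the adjacencies in Figure~\ref{fig:DegreeOneBlowup} shows that at the moment each of these vertices is added it has an odd number (either $1$ or $3$) of already-present neighbors, so Lemma~\ref{lem:upper-odd-deg} applies six times and $G_1(v)$ is upper-embeddable.

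For a degree~$2$ vertex $v$, the replacement deletes $v$, so $G$ is no longer a subgraph of $G_2(v)$ and a build-up from $G$ is not available. Instead I would delete the degree~$3$ vertex $b$ of Figure~\ref{fig:DegreeTwoBlowup}: the graph $G_2(v)-b$ is precisely $G$ with each of the two edges at $v$ subdivided once, hence homeomorphic to $G$. Because the cycle rank $|E|-|V|+1$ and the maximum genus are invariant under subdivision, $G_2(v)-b$ is upper-embeddable exactly when $G$ is; then, since $b$ has odd degree~$3$, Lemma~\ref{lem:upper-odd-deg} gives that $G_2(v)$ is upper-embeddable. Applying these two preservation statements once per replaced vertex shows that $H$ is upper-embeddable and completes the proof.

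The step I expect to be the main obstacle is the degree~$2$ case: one must notice that deleting $v$ blocks a naive inductive build-up from $G$, and the clean fix---realizing $G_2(v)-b$ as a subdivision of $G$ and invoking invariance of the maximum genus under homeomorphism---is where the argument departs from the purely combinatorial pattern of Theorem~\ref{thm:cubictosubcubic}. I would also double-check the parity bookkeeping in the degree~$1$ ordering, since an incorrect order produces an even-degree addition that Lemma~\ref{lem:upper-odd-deg} cannot absorb.
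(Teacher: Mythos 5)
Your proposal is correct and matches the paper's proof essentially step for step: the paper likewise builds $G_1(v)$ from $G$ by odd-degree vertex additions (it phrases this as deleting $d,b,f,c,e,a$, the reverse of your order $a,e,c,f,b,d$), handles the degree-$2$ case by observing $G_2(v)-b$ is $G$ with the two edges at $v$ subdivided and then applies Lemma~\ref{lem:upper-odd-deg} to $b$, and concludes via the bookkeeping of Lemmas~\ref{lem:subcubicleaves} and~\ref{lem:subcubicdegtwo} together with Corollary~\ref{cor:onefacetwoface}. The only quibble is terminological: after identifying $v$ with $v'$, $G$ is an induced (not spanning) subgraph of $G_1(v)$.
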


\begin{proof}
If $G$ is cubic the result holds, so assume there exists a vertex $v \in V(G)$ such that $\deg(v)<3$.

If $\deg(v)=1$, then let $H_1$ be the graph that replaces $v$ in $G$, and be labelled by vertices $a,b,c,d,e,f$, and $v'$ in $G_1(v)$ as in Figure \ref{fig:DegreeOneBlowup}.
Delete the vertices of $G_1$ in the following order to attain a graph that is isomorphic to $G$: $d,b,f,c,e$, and $a$.
At each step the vertex that is deleted has odd degree so Lemma \ref{lem:upper-odd-deg} can be applied.
Therefore, $G_1$ is also upper-embeddable.

If $\deg(v)=2$, then let $K_4-e$ replace $v$ in $G$, and be labelled by vertices $a,b,v_1$, and $v_2$ in $G_2(v)$ as in Figure \ref{fig:DegreeTwoBlowup}.
Subdividing both of the edges incident to $v$ in $G$ is isomorphic to $G_2-b$, which is upper-embeddable.  
By Lemma \ref{lem:upper-odd-deg}, $G_2$ is upper-embeddable since $G_2-b$ is upper-embeddable. 

Therefore, $G_1$ and $G_2$ are upper-embeddable, connected, subcubic graphs that have fewer vertices of degree less than 3 than $G$ has.  
Recursively replacing vertices with degree at most $2$ yields an upper-embeddable, connected cubic graph $H$ such that $\alpha(H)-\Z(H)=\alpha(G)-\Z(G)$.  
Therefore, $\Z(G) \leq \alpha(G)+2$ by Corollary \ref{cor:onefacetwoface}.
\end{proof}


\section{Conclusion}\label{sec:conc}
This paper primarily focuses on cubic and subcubic graphs. 
However, some of the ideas and methods can be generalized to get results and bounds relating zero forcing and independence numbers for other graphs.  
For example, by using chromatic number bounds, it can be shown that if the zero forcing number is small enough, then the zero forcing number is bounded by the independence number. 
The {\it chromatic number} of graph $G$, denoted $\chi(G)$, is the minimum number of colors needed to assign colors to the vertices of $G$ such that adjacent vertices have different colors. 
The following are well known results about the chromatic number of a graph.

\begin{theorem}[\cite{WW}] \label{thm:chialpha}
 For any graph $G$ on $n$ vertices, $\displaystyle \frac{n}{\chi(G)} \le \alpha(G)$.
\end{theorem}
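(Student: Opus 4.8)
The plan is to exploit the fact that a proper coloring of $G$ decomposes the vertex set into independent sets, and then apply the pigeonhole principle. First I would fix an optimal proper coloring of $G$ using exactly $\chi(G)$ colors. By the definition of a proper coloring, any two vertices receiving the same color are nonadjacent, so each color class is an independent set. Thus the coloring partitions $V(G)$ into color classes $V_1, V_2, \ldots, V_{\chi(G)}$, each independent in $G$, whose sizes satisfy $\sum_{i=1}^{\chi(G)} |V_i| = n$.

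Next I would observe that the average size of a color class is $n/\chi(G)$, so by the pigeonhole principle at least one color class $V_j$ satisfies $|V_j| \ge n/\chi(G)$. Since $V_j$ is an independent set of $G$, its cardinality is a lower bound for the independence number, giving $\alpha(G) \ge |V_j| \ge n/\chi(G)$, which is exactly the claimed inequality.

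The main obstacle, insofar as one exists at all, is merely recognizing the structural link between the chromatic number and the independence number, namely that each color class of a proper coloring is an independent set; once this observation is in place the counting argument is immediate. No case analysis, extremal construction, or delicate estimate is required, so I would expect the entire argument to occupy only a few lines.
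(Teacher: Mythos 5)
Your proof is correct: each color class of a proper $\chi(G)$-coloring is an independent set, and by pigeonhole some class has size at least $n/\chi(G)$, which lower-bounds $\alpha(G)$. The paper states this result as a known theorem cited to \cite{WW} without proof, and your argument is exactly the standard one behind it, so there is nothing to reconcile.
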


\begin{theorem}[\cite{KAT}] \label{thm:chiz}
For any graph $G$, $\chi(G) \le \Z(G)+1$.
\end{theorem}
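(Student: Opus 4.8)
The plan is to prove this directly by strong induction on $|V(G)|$, reducing the statement to a greedy-coloring argument. The whole proof will hinge on exhibiting, in any graph $G$ with at least one edge, a single vertex $v$ that can be deleted so that simultaneously (a) $\deg(v) \le \Z(G)$ and (b) $\Z(G-v) \le \Z(G)$. Granting such a $v$, the induction hypothesis supplies a proper coloring of $G-v$ using at most $\Z(G-v)+1 \le \Z(G)+1$ colors; since $v$ has at most $\Z(G)$ neighbors, at most $\Z(G)$ of the available $\Z(G)+1$ colors are forbidden at $v$, so the coloring extends and $\chi(G)\le \Z(G)+1$. The edgeless case is immediate since then $\chi(G)=1$, and a single vertex is the trivial base case, so the only real work is producing the vertex $v$.

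To find the right vertex, I would take a minimum zero forcing set $B$ together with a chronological list of forces, and let $b$ be the vertex performing the very first force, sending its unique white neighbor $w$ blue. At that first step nothing has yet been forced, so every blue vertex lies in $B$; hence the $\deg(b)-1$ neighbors of $b$ other than $w$ all belong to $B$, and together with $b$ itself this yields $\deg(b)\le|B|=\Z(G)$, giving property (a). For (b), note that a vertex never regains white neighbors, so $b$ forces exactly once in the entire process, namely $b\to w$. Setting $B'=(B\setminus\{b\})\cup\{w\}$, which has size at most $\Z(G)$, I would argue that $B'$ is a zero forcing set of $G-b$: deleting the \emph{blue} vertex $b$ never changes any vertex's set of white neighbors, so every force $u\to x$ with $u\ne b$ remains valid, while the single force made by $b$ is replaced by placing $w$ in $B'$ from the start. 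Thus all of $V(G-b)$ is colored and $\Z(G-b)\le \Z(G)$.

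The step I expect to be the main obstacle is precisely (b), because $\Z$ is \emph{not} monotone under vertex deletion; deleting the center of a star, for instance, strictly increases it, so one cannot simply delete an arbitrary minimum-degree vertex and hope $\Z$ does not grow. The resolution is that the first-forcing vertex $b$ is an \emph{initial} vertex of its forcing chain, and such chain-endpoints are exactly the vertices whose removal leaves a zero forcing set of no larger size, while the first-force estimate guarantees that this same vertex also has low degree. Getting both properties to hold for one vertex is what lets the induction close, and the only technical point to check with care is the claim that deleting a permanently blue vertex invalidates none of the surviving forces.
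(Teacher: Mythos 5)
Your proof is correct, but note that there is nothing in the paper to compare it against: Theorem \ref{thm:chiz} is imported from \cite{KAT} as a known result and no proof is given, so your argument is a genuinely self-contained addition. Both pillars of your induction check out. For (a), at the moment of the first force the blue set is exactly $B$, so $b$ together with its $\deg(b)-1$ blue neighbors gives $\deg(b)\le|B|=\Z(G)$. For (b), since $b\in B$ is blue from the start it is never any vertex's white neighbor, so deleting it changes no vertex's set of white neighbors at any stage; every later force $u\to x$ has $u\ne b$ (as you observe, $b$ has no white neighbors after forcing $w$, so it forces exactly once) and remains valid in $G-b$, and since $w\notin B$ the set $B'=(B\setminus\{b\})\cup\{w\}$ has size exactly $\Z(G)$, giving $\Z(G-b)\le\Z(G)$. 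The one step you leave implicit is the existence of a first force: you need $B\ne V(G)$, which holds because if $G$ has an edge $uv$ then $V(G)\setminus\{u\}$ is a zero forcing set, so $\Z(G)\le|V(G)|-1$ and at least one force occurs; this is a one-line patch, not a gap. Your diagnosis of the danger is also apt: $\Z$ is not monotone under vertex deletion, so one cannot argue via an arbitrary minimum-degree vertex. It is worth noting that iterating your lemma in fact yields an elimination ordering in which every vertex has degree at most $\Z(G)$ at the time of its deletion (as long as edges remain, $\Z$ of the current graph never exceeds $\Z(G)$, and once the graph is edgeless all degrees are $0$), i.e., $G$ is $\Z(G)$-degenerate; this degeneracy formulation is the shape in which the bound $\chi(G)\le\Z(G)+1$ is usually established in the cited source, so your inductive argument recovers the standard proof in a slightly repackaged form.
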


Combining Theorem \ref{thm:chialpha} and Corollary \ref{thm:chiz} gives Corollary \ref{cor:zalpha}.

\begin{corollary}\label{cor:zalpha}
For any graph $G$ on $n$ vertices, $\displaystyle \frac{n}{\Z(G) + 1} \le \alpha(G) $.
\end{corollary}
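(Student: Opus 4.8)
The plan is to chain the two cited bounds transitively. First I would invoke Theorem \ref{thm:chiz} to obtain $\chi(G) \le \Z(G)+1$. Since both $\chi(G)$ and $\Z(G)+1$ are strictly positive (indeed $\chi(G)\ge 1$ and $\Z(G)\ge 0$ for every graph), passing to reciprocals reverses the inequality, yielding $\frac{1}{\Z(G)+1} \le \frac{1}{\chi(G)}$; multiplying through by the nonnegative quantity $n$ preserves the direction, so $\frac{n}{\Z(G)+1} \le \frac{n}{\chi(G)}$. Then I would apply Theorem \ref{thm:chialpha}, which gives $\frac{n}{\chi(G)} \le \alpha(G)$, and compose the two displayed inequalities to conclude $\frac{n}{\Z(G)+1} \le \alpha(G)$, as desired.

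There is essentially no substantive obstacle in this argument, since it is a direct composition of the two previously stated results. The only step warranting a moment's care is the reversal of the inequality when taking reciprocals, which is legitimate precisely because both denominators are positive; this is why I would record the trivial observations $\chi(G)\ge 1$ and $\Z(G)\ge 0$ before performing the reciprocal step. Everything else is immediate transitivity.
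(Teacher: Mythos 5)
Your proof is correct and matches the paper's approach exactly: the paper likewise obtains Corollary \ref{cor:zalpha} by combining Theorem \ref{thm:chialpha} with Theorem \ref{thm:chiz}, i.e.\ $\frac{n}{\Z(G)+1} \le \frac{n}{\chi(G)} \le \alpha(G)$. Your extra care about positivity of the denominators when taking reciprocals is a fine (if routine) addition to what the paper leaves implicit.
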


As an immediate consequence of Corollary \ref{cor:zalpha}, it follows that if $G$ is a connected graph on $n$ vertices and $\Z(G) \le \left\lceil\dfrac{n}{\Z(G) + 1}\right\rceil$, then $\Z(G) \le \alpha(G)$.  Corollary \ref{cor:Z<a2} makes use of this fact.

\begin{corollary}\label{cor:Z<a2}
    If $G$ is a graph on $n$ vertices and $\Z(G) \le \sqrt{n}$, then $\Z(G) \le \alpha(G)$.
\end{corollary}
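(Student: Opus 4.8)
The plan is to derive the corollary directly from Corollary \ref{cor:zalpha}, which already supplies the lower bound $\alpha(G) \ge \frac{n}{\Z(G)+1}$, and to extract the extra strength needed from the integrality of $\alpha(G)$. Write $z = \Z(G)$ for brevity. The naive approach would be to chain $\alpha(G) \ge \frac{n}{z+1} \ge z$, but the second inequality demands $n \ge z^2 + z$, which is strictly stronger than the hypothesis $z \le \sqrt{n}$ (i.e.\ $n \ge z^2$). So the real work is to avoid passing through $\frac{n}{z+1} \ge z$ and instead round the fractional bound up before comparing to $z$.

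First I would record that the hypothesis $z \le \sqrt{n}$ is equivalent to $z^2 \le n$, and since both $z^2$ and $n$ are integers this is in turn equivalent to the strict inequality $n > z^2 - 1 = (z-1)(z+1)$. Dividing through by the positive quantity $z+1$ gives $\frac{n}{z+1} > z - 1$. The key elementary fact is that for an integer $z$ one has $\lceil x \rceil \ge z$ precisely when $x > z - 1$; applying this with $x = \frac{n}{z+1}$ yields $\left\lceil \frac{n}{z+1} \right\rceil \ge z$.

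To finish, I would combine this with Corollary \ref{cor:zalpha}. Since $\alpha(G)$ is an integer and $\alpha(G) \ge \frac{n}{z+1}$, we may round the right-hand side up to obtain $\alpha(G) \ge \left\lceil \frac{n}{z+1} \right\rceil$, and hence $\alpha(G) \ge z = \Z(G)$, which is the claim. This is exactly the observation stated in the remark immediately preceding the corollary, specialized to the threshold $\Z(G) \le \sqrt{n}$.

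The main (and essentially only) obstacle is the gap described above between $n \ge z^2$ and $n \ge z^2 + z$; everything hinges on resolving it via the ceiling rather than a direct division. The two places to check carefully are the integer-to-integer equivalence $z^2 \le n \iff n > z^2 - 1$ and the ceiling characterization $\lceil x \rceil \ge z \iff x > z - 1$ for integral $z$, both of which are routine but must be invoked with the integrality hypotheses made explicit so that no strictness is lost.
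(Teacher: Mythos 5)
Your proof is correct and takes essentially the same approach as the paper's: both establish $\left\lceil \frac{n}{\Z(G)+1}\right\rceil \ge \Z(G)$ from $n \ge [\Z(G)]^2$ via integrality, then combine this with Corollary \ref{cor:zalpha} and the integrality of $\alpha(G)$. The paper merely packages the same ceiling argument differently, via the chain $\Z(G)=\left\lfloor\frac{[\Z(G)]^2-1}{\Z(G)+1}\right\rfloor+1 \leq \left\lfloor\frac{n-1}{\Z(G)+1}\right\rfloor+1=\left\lceil\frac{n}{\Z(G)+1}\right\rceil$, which is a cosmetic rather than substantive difference.
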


\begin{proof}
Since $\Z(G) \leq \sqrt{n}$, 
\[\Z(G)=\left\lfloor\frac{[\Z(G)]^2-1}{\Z(G)+1}\right\rfloor+1 \leq \left\lfloor\frac{n-1}{\Z(G)+1}\right\rfloor+1=\left\lceil\frac{n}{\Z(G)+1}\right\rceil \leq \alpha(G).\]
\end{proof}

While Conjecture \ref{theconjecture} still remains open, this paper provides evidence supporting its validity by proving the bound for many more cubic graphs and providing a very close bound for most cubic graphs. 
In fact, there is also evidence to believe that the conjecture generalizes such that  for any connected graph $G$ that is not complete, 
\[\Z(G) \le (\Delta(G) - 2)\alpha(G) + (\Delta(G) -2).\] Theorem \ref{thm:Delta-1} is proved as many other results in this paper are: identify a maximum independent set then modify it to create a zero forcing set.

\begin{theorem}\label{thm:Delta-1}
            If $G$ is connected and not a complete graph with $3 \le \Delta(G)$, then \[\Z(G) \le (\Delta(G) -1)\alpha(G).\]
\end{theorem}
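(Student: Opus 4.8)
The plan is to start from a maximum independent set $A$ of $G$, set $R = V(G)\setminus A$, and exploit two consequences of maximality. First, a maximum independent set is dominating, so every $r\in R$ has a neighbor in $A$ and therefore $\deg_{G[R]}(r) \le \deg_G(r) - 1 \le \Delta(G) - 1$; hence $\Delta(G[R]) \le \Delta(G) - 1$. Second, any independent set of $G[R]$ is independent in $G$, so $\alpha(G[R]) \le \alpha(G)$. Exactly as in the proof of Theorem~\ref{thm:G-Jforest}, if $B$ is a zero forcing set of $G[R]$ then $A\cup B$ is a zero forcing set of $G$, which gives the reduction \[\Z(G) \le |A| + \Z(G[R]) = \alpha(G) + \Z(G[R]).\] It therefore suffices to prove $\Z(G[R]) \le (\Delta(G)-2)\,\alpha(G)$; since $\Delta(G[R]) \le \Delta(G)-1$ and $\alpha(G[R]) \le \alpha(G)$, this is precisely the assertion of the theorem for a graph whose maximum degree is one smaller.

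This structure suggests an induction on $\Delta = \Delta(G)$, with base case $\Delta = 3$ (so $G[R]$ has maximum degree at most $2$). First I would split $G[R]$ into its connected components $H_1,\dots,H_c$ and use additivity of both parameters, $\Z(G[R]) = \sum_j \Z(H_j)$ and $\alpha(G[R]) = \sum_j \alpha(H_j)$. Each component $H_j$ has $\Delta(H_j) \le \Delta - 1$, so I would dispatch it by type: a complete component $K_m$ with $m \le \Delta - 1$ satisfies $\Z(K_m) = m-1 \le \Delta - 2 = (\Delta-2)\alpha(K_m)$; a non-complete component of maximum degree $2$ is a path or a cycle of length at least $4$, handled by Theorem~\ref{thm:Z(T)=P(T)} and a direct estimate; and a non-complete component with maximum degree between $3$ and $\Delta - 1$ is handled by the induction hypothesis, giving $\Z(H_j) \le (\Delta(H_j)-1)\alpha(H_j) \le (\Delta-2)\alpha(H_j)$. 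Summing these yields $\Z(G[R]) \le (\Delta-2)\alpha(G[R]) \le (\Delta-2)\alpha(G)$ and hence the theorem.

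The one type that does not fit this pattern, and the main obstacle, is a complete component equal to $K_{\Delta}$: here $\Z(K_\Delta) = \Delta - 1$ exceeds the per-component budget $(\Delta - 2)\alpha(K_\Delta) = \Delta - 2$ by exactly one. The point to exploit is that the reduction $\Z(G) \le \alpha(G) + \Z(G[R])$ is wasteful precisely for such cliques: every vertex of a $K_\Delta$ in $R$ has internal degree $\Delta - 1$, hence exactly one neighbor in $A$, so once $A$ is already blue a clique vertex can be forced through that outside neighbor. Concretely, for each $K_\Delta$ component I would place only $\Delta - 2$ of its vertices in the forcing set (saving the missing unit) and leave two of its vertices white, chosen to have distinct neighbors in $A$. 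I would then show that this pair is forced once $A$ and the remaining components are blue: because the two white vertices have different $A$-neighbors, one of those $A$-vertices sees only one white vertex and forces it, after which the last white clique vertex is forced from inside the clique. The delicate part is verifying that such a choice of white vertices exists and cascades globally, i.e.\ that no residual collection of clique remainders survives as a fort; the case in which all vertices of a $K_\Delta$ share a single $A$-neighbor would make $G$ equal to $K_{\Delta+1}$ and is excluded by hypothesis, while the maximality of $A$ is what rules out the configurations in which two cliques compete for the same $A$-vertices. Pinning down this cascade is where the real work of the proof lies.
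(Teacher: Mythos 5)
Your proposal follows essentially the same route as the paper's proof of Theorem \ref{thm:Delta-1}: fix a maximum independent set $A$, induct on $\Delta$, dispatch the components of $G-A$ by type, and treat the exceptional components isomorphic to $K_\Delta$ by withholding two of their vertices chosen to have distinct neighbors in $A$. The differences are cosmetic or to your credit: you sum per-component bounds $\Z(H_j)\le(\Delta-2)\alpha(H_j)$ against $\alpha(G[R])\le\alpha(G)$, while the paper assembles one forcing set $A\cup B_1\cup\cdots\cup B_k$ directly; and your explicit split into small complete components, components of maximum degree at most $2$, and components handled by the inductive hypothesis is actually more careful than the paper's blanket appeal to the inductive hypothesis for every component not isomorphic to $K_{r+1}$ (which, read literally, does not cover complete or low-degree components, though the numbers work out). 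One small slip: Theorem \ref{thm:Z(T)=P(T)} concerns acyclic graphs and does not apply to cycles, but $\Z(C_n)=2\le\alpha(C_n)$ for $n\ge 4$ is immediate anyway. Note also that with the clique trick you are no longer literally bounding $\Z(G[R])$ (a disjoint union of $K_\Delta$'s violates that intermediate target); you are bounding the size of a set $B\subseteq R$ for which $A\cup B$ forces $G$, which is what your final paragraph implicitly does.

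The step you defer --- ``pinning down this cascade'' --- is the one genuine omission, but it closes via exactly the maximality exchange you gesture at, and it is much shorter than you fear: there is no cascade. Each vertex of a $K_\Delta$-component has internal degree $\Delta-1$, hence exactly one neighbor in $A$ (which exists since a maximum independent set is dominating). If some $a\in A$ were adjacent to nonadjacent vertices $x,y$ of $G-A$ with $\deg_{G-A}(x)=\deg_{G-A}(y)=\Delta-1$, then $(A\setminus\{a\})\cup\{x,y\}$ would be a larger independent set, since $a$ is the unique $A$-neighbor of each of $x$ and $y$; this is the paper's key observation, stated in its base case and reused in the inductive step. Consequently each $a\in A$ is adjacent to vertices of internal degree $\Delta-1$ in at most one component of $G-A$. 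Now color $A$ and all the $B_j$ blue and force every non-exceptional component (forcing inside a component is unobstructed because all external neighbors lie in the blue set $A$). At that point the only white vertices are the chosen pairs $\{x_i,y_i\}$; the $A$-neighbor of $x_i$ is not adjacent to $y_i$ (the pair has distinct $A$-neighbors) nor to any white vertex of another clique (by the exchange observation), so it forces $x_i$, after which $y_i$ is forced from inside its clique. Each exceptional clique is thus resolved independently, so no residual fort and no interaction between cliques can occur; and your degenerate case, in which all of a $K_\Delta$ shares one $A$-neighbor $a$, forces $N(a)=V(K_\Delta)$ and, by connectivity, $G=K_{\Delta+1}$, excluded by hypothesis --- the paper's ``otherwise $G$ is disconnected or complete.'' With that lemma written out, your outline is a complete proof and coincides with the paper's.
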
 
\begin{proof}
    This proof is inductive on $\Delta$.
    Assume $\Delta(G) =3$. Let $A$ be a maximum independent set for $G$ and $H = G-A$.  
    Each connected component of $H$ is a path or cycle. If $a \in A$ with neighbors $x,y \in V(H)$ such that $\deg_H(x) = \deg_H(y) = 2$, then $x$ and $y$ must be adjacent; otherwise, $(A\backslash\{a\}) \cup \{x,y\}$ is a larger independent set. So each vertex of $A$ has neighbors in at most one cycle of $H$.
    
    Let $B$ be a set of vertices that contains an endpoint from each path of $H$ and a vertex from each cycle of $H$ that is also in a $K_3$, if such a vertex exists. Since $B$ contains at most one vertex from each component of $H$ it is an independent set and $|B| \le \alpha(G)$.
    
    Color the vertices of $G$ in $A \cup B$ blue. Each path of $H$ is able to be forced blue. Once all of the paths of $H$ are all blue, the cycles can be forced in the following way, since no vertex of $A$ is adjacent to any two distinct cycles. If a cycle of $H$ has a blue vertex, then its neighbor in $A$ can force their common neighbor that is also on the same cycle. Therefore, the entire cycle can be forced blue. If a cycle of $H$ does not have a blue vertex, then each vertex of the cycle can be forced by its neighbor in $A$ and the entire cycle is forced blue. Therefore, $A \cup B$ is a zero forcing set of $G$ and $|A \cup B| \le 2\alpha(G)$, so $\Z(G) \le 2\alpha(G)$.

    Assume that if $G$ is a graph that is not complete and $3\le \Delta(G) \le r$, then $\Z(G) \le (\Delta(G)-1)\alpha(G)$.
    Let $G$ be a graph with $\Delta(G) = r+1$ and $A$ be a maximum independent set of $G$. 
    Let the connected components of $G - A$ be $C_1, \ldots, C_k$. 
    
    If $C_i$ is not isomorphic to $K_{r+1}$, then $\Delta(C_i) \le r$ and $\Z(C_i) \le (r-1)\alpha(C_i)$ by the inductive hypothesis. 
    For each $C_i$ that is not isomorphic to $K_{r+1}$, let $B_i$ be a zero forcing set of $C_i$. 
    If $C_i$ is isomorphic to $K_{r+1}$, then there exist $x_i, y_i \in V(C_i)$ such that $x_i$ and $y_i$ do not have a common neighbor in $A$; otherwise, $G$ is disconnected or complete. 
    For each $C_i$ that is isomorphic to $K_{r+1}$, let $B_i = V(C_i) \setminus \{x_i, y_i\}$. 
    Therefore, $|B_i| \le (r-1) \alpha(C_i)$ for all $i$.

    Let $B = A \cup B_1 \cup \cdots \cup B_k$. 
    It is straightforward to show that $B$ is a zero forcing set of $G$, since each component of $H$ not isomorphic to $K_{r+1}$ can be forced blue and each $x_i$ can be forced by a vertex in $A$, allowing for the remaining components of $H$ to be forced completely blue (similar to the base case, no vertex in $A$ can be adjacent to two vertices of degree $r$ in different components of $H$). 
    Therefore, $\Z(G) \le |B| = \alpha(G) + \displaystyle\sum_{i=1}^k |B_i| \le \alpha(G) +(r-1)\sum_{i=1}^k \alpha(C_i).$ 
    Since the union of the independent sets from each $C_i$ is an independent set in $G$, $\displaystyle\sum_{i=1}^k \alpha(C_i) \le \alpha(G)$. 
    Hence, $|B| \le r\alpha(G)$.\end{proof}

\subsection*{Acknowledgements}
The authors would like to acknowledge Randy Davila, who first conjectured that $3$-$1$ trees would provide a way of constructing an infinite family of graphs witnessing the tightness of the bound in Conjecture \ref{theconjecture}. 
 Conjectures generated by TxGraffiti can be found in a useful interface at \cite{txgraffiti}. Thanks also to Markus Meringer for providing $3$-regular graph data that allowed for computational testing of many early conjectures.

\end{document}